\newtheorem{theorem}{Theorem}[section]
\newtheorem{lemma}[theorem]{Lemma}
\newtheorem{remark}[theorem]{Remark}
\newtheorem{claim}[theorem]{Claim}
\newtheorem{definition}{Definition}
\DeclareMathOperator{\arctanh}{arctanh}
\def\vf{\varphi}
\def\t{\theta}
\begin{document}

\title{\textbf{The solution gap of the Brezis--Nirenberg Problem on the hyperbolic space}}

\author[Benguria]{Soledad Benguria$^1$}

\address{$^1$ Department of Mathematics, University of Wisconsin - Madison}


\smallskip

\begin{abstract}
We consider the positive solutions of the nonlinear eigenvalue problem $-\Delta_{\mathbb{H}^n} u = \lambda u + u^p, $ with $p=\frac{n+2}{n-2}$ and $u \in H_0^1(\Omega),$ where $\Omega$ is a geodesic ball of radius $\theta_1$ on $\mathbb{H}^n.$ For radial solutions, this equation can be written as an ODE having $n$ as a parameter. In this setting, the problem can be extended to consider real values of $n.$ We show that if $2<n<4$ this problem has a unique positive solution if and only if  $\lambda\in \left( n(n-2)/4 +L^*\,,\, \lambda_1\right).$ Here $L^*$ is the first positive value of $L = -\ell(\ell+1)$ for which a suitably defined associated Legendre function $P_{\ell}^{-\alpha}(\cosh\theta) >0$ if $0 < \t<\t_1$ and $P_{\ell}^{-\alpha}(\cosh\t_1)=0,$ with $\alpha = (2-n)/2.$

\end{abstract}


\maketitle
\section{Introduction}\label{intro}

Given a bounded domain $\Omega$ in $\mathbb{R}^n,$ Brezis and Nirenberg  \cite{BrNi83} considered the problem of existence of a function $u \in H_{0}^1(\Omega)$ satisfying 

\begin{eqnarray}\label{eq:BrNioriginal}
-\Delta u &=& \lambda u + u^p \,\,\,\, \text{on} \,\,\, \Omega \\
u &>& 0\,\,\,\,\,\,\,\,\, \qquad  \text{on}\,\,\, \Omega  \nonumber\\
u & = & 0 \,\,\,\,\,\,\,\,\, \qquad  \text{on}\,\,\, \partial\Omega,  \nonumber
\end{eqnarray}

\noindent where $p= (n+2)/(n-2)$ is the critical Sobolev exponent. If $\lambda \ge \lambda_1,$ where $\lambda_1$ is the first Dirichlet eigenvalue, this problem has no solutions. Moreover, if the domain is star-shaped, there is no solution if $\lambda \le 0.$ Thus, when $\Omega$ is a ball, for any given value of $n$ there may exist a solution only if $\lambda\in(0,\lambda_1).$ It was shown in \cite{BrNi83} that in dimension $n \ge 4,$ there exists a solution for all $\lambda$ in this range. However, in dimension $n=3$ Brezis and Nirenberg showed there is an additional interval where there is no solution, which we will refer to in this article as the {\it solution gap} of the Brezis-Nirenberg problem. When the domain is the unit ball, the solution gap when $n=3$ is the interval $\left(0, \frac{\lambda_1}{4}\right].$   

\bigskip

The dimensions for which semilinear second order elliptic problems with a nonlinear term of critical exponent (of which (\ref{eq:BrNioriginal}) is an example) have a solution gap are referred to in the literature as critical dimensions. This definition was first introduced by Pucci and Serrin in \cite{PuSe90}. In \cite{Ja99}, Jannelli studies a general class of such problems, and the associated critical dimensions. He gives an alternative proof to the existence results obtained in \cite{BrNi83} for problem (\ref{eq:BrNioriginal}). When $\Omega$ is a ball, and $n=3,$ Jannelli shows that (\ref{eq:BrNioriginal}) has no solution if $\lambda \le j_{\alpha,1}^2,$ where $\alpha = (2-n)/2$ and $j_{\alpha,1}$ denotes the first positive zero of the Bessel function $J_{\alpha}.$

\bigskip

If $u$ is radial, problem (\ref{eq:BrNioriginal}) can be written as an ordinary differential equation,
$$ -u'' -\frac{(n-1)}{r}u' = \lambda u + u^p,$$
\noindent  where $n$ can be thought of as a parameter in the equation, rather than the dimension of the space. By doing so one can study the behavior of the solution gap with respect to $n$ by taking $n$ to be a real number instead of a natural number. Jannelli's methods in \cite{Ja99} can be easily extended to the case $2<n<4,$ thus concluding that the solution gap of the Brezis-Nirenberg problem defined in the unit ball is the interval $\left(0, j_{\alpha,1}^2\right].$ In particular, it follows that $n=4$ is the first value of $n$ for which there is no solution gap. 

\bigskip

The solution gap of the Brezis-Nirenberg problem can also be studied in non-Euclidean spaces. On the sphere $\mathbb{S}^n,$ for a fixed $n,$ the solution gap is the subinterval of $(-n(n-2)/4,\lambda_1)$ for which (\ref{eq:BrNioriginal}) has no solution. As in the Euclidean case, $n=3$ is a critical dimension, whereas $n\ge 4$ are not. It was shown in \cite{BaBe02} that if $\Omega$ is a geodesic cap of radius $\theta_1$ in $\mathbb{S}^3$ the solution gap is the interval $(-n(n-2)/4,(\pi^2-4\theta_1^2)/4\theta_1^2].$ If $u$ is radial, then (\ref{eq:BrNioriginal}) can be written as an ordinary differential equation that still makes sense when $n$ is a real number. It was shown in \cite{BeBe15} that if $2<n<4,$ the solution gap is the interval $ \left( -n(n-2)/4, \left((2\ell^*+1)^2 - (n-1)^2\right)/4\right],$ where $\ell^*$ is the first positive value of $\ell$ for which the associated Legendre function $P_{\ell}^\alpha(\cos \theta_1)$ vanishes.  Here  $\alpha = (2-n)/2.$


\bigskip

In this article we consider the Brezis-Nirenberg problem on the hyperbolic space $\mathbb{H}^n.$  That is, we consider the problem 

\begin{eqnarray}\label{eq:BrNihiper}
-\Delta_{\mathbb{H}^n} u &=& \lambda u + u^p \,\,\,\, \text{on} \,\,\, \Omega \\
u &>& 0\,\,\,\,\,\,\,\,\, \qquad  \text{on}\,\,\, \Omega  \nonumber\\
u & = & 0 \,\,\,\,\,\,\,\,\, \qquad  \text{on}\,\,\, \partial\Omega,  \nonumber
\end{eqnarray}

\noindent where $p= (n+2)/(n-2),$ $\Omega$ is a geodesic ball on $\mathbb{H}^n$ of radius $\theta_1\in(0,\infty),$ and $\Delta_{\mathbb{H}^n}$ is the Laplace-Beltrami operator. 

\bigskip

It is not hard to show (see, e.g., page 285 in \cite{St02}) that there can be no solutions for $\lambda \not\in \left( n(n-2)/4, \lambda_1\right).$ Stapelkamp \cite{St02} showed that if $n\ge 4$ there is no solution gap, that is, that there is a solution for all values of $\lambda$ in this interval. When $n=3,$ however, she showed there is no solution if $ \lambda \in\left( n(n-2)/4, \lambda^*\right].$ Here 

$$ \lambda^* = 1 + \frac{\pi^2}{16 \arctanh^2 R}, $$

\noindent where $R$ is the radius of the ball that results by taking the stereographic projection of the geodesic ball onto $\mathbb{R}^3.$ Moreover, Stapelkamp shows that for each $\lambda \in (\lambda^*,\lambda_1),$ there exists a unique solution, and this solution is radial. A full characterization of the solutions to this problem in dimension $n\in \mathbb{N}$ (and any $p>1$) is given in \cite{BaKa12}. After the results of Stapelkamp and Bandle, there has been a vast literature on Brezis-Nirenberg type equations on hyperbolic spaces (see, e.g., \cite{MaSa08}, \cite{GaSa14}, \cite{GaSa15}, \cite{BoGaGrVa13}).   

\bigskip

For radial functions $u,$ problem (\ref{eq:BrNihiper}) can be written as an ordinary differential equation, with $n$ now simply representing a parameter in the equation rather than the dimension of the space. Our main result is that the solution gap of the Brezis-Nirenberg problem on the hyperbolic space has width $L^*,$ where $L^*$  is the first positive value of $L= -\ell(\ell+1)$ for which a suitably defined (see equation (\ref{eq:expansionenseries})) associated Legendre function $P_{\ell}^{-\alpha}(\cosh\theta) $ is positive if $0 < \theta<\theta_1$ and $P_{\ell}^{-\alpha}(\cosh\theta_1)=0.$ Here, as before, $\alpha = (2-n)/2.$ 


\bigskip 

More precisely, we show the following:

\begin{theorem}\label{main}
For any $2<n<4$ and $\theta_1\in(0,\infty),$ the boundary value problem
\begin{equation}\label{eq:ode}
-u''(\theta) - (n-1)\coth \theta\, u'(\theta) = \lambda u+ u^{\frac{n+2}{n-2}}
\end{equation}
with $u\in H_0^1(\Omega),$ $u'(0) = u(\theta_1)=0,$ and $\theta\in  [0,\theta_1]$ has a unique positive solution if and only if
\begin{equation}\label{eq:intervalo}
   \lambda\in \left( \frac{n(n-2)}{4} +L^*\,,\, \lambda_1\right).
   \end{equation}
\end{theorem}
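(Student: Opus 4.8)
The plan is to treat \eqref{eq:ode} as an initial value problem shot from $\theta=0$ and to locate, as a function of the eigenvalue parameter $\lambda$, the first zero of the resulting positive solution; a solution of the boundary value problem exists precisely when this first zero can be placed at $\theta_1$. The two endpoints of the interval \eqref{eq:intervalo} are established by separate mechanisms: the upper endpoint $\lambda_1$ is the first Dirichlet eigenvalue, above which the standard argument (testing the equation against the first eigenfunction) forbids any positive solution, while the lower endpoint $n(n-2)/4 + L^*$ is the threshold of the critical-dimension gap and is the delicate part. The organizing device is the substitution
\begin{equation*}
u(\theta) = (\sinh\theta)^{\alpha} f(\cosh\theta), \qquad \alpha = \frac{2-n}{2},
\end{equation*}
under which, after writing $x=\cosh\theta$, the linear operator $-u'' - (n-1)\coth\theta\, u'$ is conjugated to the associated Legendre operator of order $-\alpha=(n-2)/2$. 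Setting $\lambda = n(n-2)/4 + L$ with $L=-\ell(\ell+1)$, the homogeneous equation becomes exactly Legendre's equation for $f=P_\ell^{-\alpha}(\cosh\theta)$, which is how the functions of \eqref{eq:expansionenseries} enter. The regular-at-origin solution corresponds to order $+\alpha$ (it is bounded as $\theta\to0$), whereas the \emph{singular} solution $P_\ell^{-\alpha}$, which blows up like $\theta^{2-n}$, governs the outer profile of a concentrating family; the first $L>0$ at which $P_\ell^{-\alpha}(\cosh\theta)$ is positive on $(0,\theta_1)$ and vanishes at $\theta_1$ is precisely $L^*$, and this Green's-function-type obstruction fixes the bottom of the gap.

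For nonexistence when $\lambda\le n(n-2)/4+L^*$ I would derive a Pohozaev-type identity adapted to $\mathbb{H}^n$: multiply \eqref{eq:ode} by the hyperbolic analogue of the dilation multiplier, a contraction of the conformal vector field of the form $a(\theta)u'+b(\theta)u$ with $a,b$ built from $\sinh\theta$ and $\cosh\theta$, and integrate over $[0,\theta_1]$. Since $u(\theta_1)=0$, the bulk terms collapse to a single definite-sign integral plus a boundary term proportional to $u'(\theta_1)^2$; the critical exponent makes the nonlinear contributions cancel, leaving an identity whose solvability forces a weight built from the linear problem to be nonnegative. That weight is controlled by $P_\ell^{-\alpha}$ and changes sign exactly at $L=L^*$, precluding a positive solution at or below the threshold.

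For existence when $\lambda\in(n(n-2)/4+L^*,\lambda_1)$ I would use the variational formulation: minimize the quotient $\big(\int(|\nabla u|^2-\lambda u^2)\big)/\big(\int|u|^{2^*}\big)^{2/2^*}$ over $H_0^1(\Omega)$ and invoke the Brezis--Nirenberg criterion that a minimizer exists once the infimum $S_\lambda$ lies strictly below the Sobolev constant $S$. Inserting a truncated Aubin--Talenti bubble transplanted to $\mathbb{H}^n$ by stereographic projection and expanding the energy to subleading order, the sign of the correction term --- again expressible through the singular Legendre profile --- is negative precisely when $\lambda>n(n-2)/4+L^*$, yielding $S_\lambda<S$ and hence a positive radial solution. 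Uniqueness would then follow by combining radial symmetry of positive solutions (moving planes or symmetrization on the geodesic ball) with a Sturmian/shooting argument showing that the first-zero map $\gamma\mapsto\theta(\gamma,\lambda)$ is strictly monotone in the amplitude $\gamma=u(0)$ for each admissible $\lambda$, so at most one $\gamma$ places the first zero at $\theta_1$.

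The main obstacle is the lower endpoint: both the Pohozaev weight in the nonexistence argument and the energy correction in the existence argument are the \emph{subleading} quantities that decide the critical-dimension gap, so they must be computed sharply and their sign shown to switch exactly at $L=L^*$. Controlling these borderline terms --- in particular matching the inner bubble expansion to the outer $P_\ell^{-\alpha}$ profile and proving that the correction vanishes to leading order precisely at threshold --- is where the analysis is most delicate and where the restriction $2<n<4$ is essential.
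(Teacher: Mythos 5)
Your proposal follows essentially the same route as the paper: existence via the Brezis--Nirenberg variational criterion with a cut-off Aubin--Talenti bubble transplanted by stereographic projection, the subleading energy correction being governed by the Legendre profile $P_{\ell}^{-\alpha}$ whose first Dirichlet value gives $L^*$, and nonexistence via a Pohozaev-type identity with weights built from $\sinh\theta$ and the associated Legendre functions (the paper chooses the weight so that the $u^2$ terms vanish identically and the $u^{p+1}$ coefficient has a sign, rather than cancelling the nonlinear contribution as you describe, but the mechanism is the same). The only substantive divergence is uniqueness, where the paper simply verifies the $\Lambda$-function hypothesis of Kwong and Li instead of running a moving-planes-plus-shooting argument, which is a packaging difference rather than a different idea.
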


In Figure \ref{figura} the graph $\lambda(n)$ illustrates the results of Theorem \ref{main}  when $\theta_1=1.$ The shaded region represents the solution gap, and the region between the dotted and the solid lines corresponds to the region of existence of solutions given by (\ref{eq:intervalo}). 
\bigskip
\begin{figure}[h]
    \centering
    \includegraphics[width=0.6\textwidth]{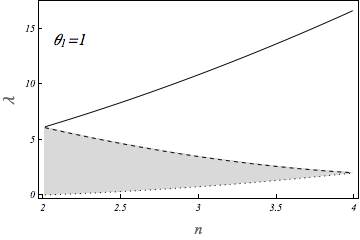}
    \label{figura}
\caption{The shaded region depicts the solution gap of the Brezis-Nirenberg problem in the hyperbolic space. The solid line corresponds to $\lambda_1,$ the dashed line to $\lambda =  n(n-2)/4 + L^*,$ and the dotted line to $\lambda = n(n-2)/4.$  }
\end{figure}

In Section \ref{sec:preliminaries} we derive an expression for the first Dirichlet eigenvalue in terms of the parameter $\ell$ of an associated Legendre function, and use this expression to show that the interval of existence given by (\ref{eq:intervalo}) is non-empty if $2<n<4.$ In Section \ref{sec:existencia} we use a classical Lieb lemma to show the existence of solutions for $\lambda$ as in  (\ref{eq:intervalo}). In Section \ref{sec:noexistencia} we use a Pohozaev type argument to show that if $2<n<4$ there is a solution gap of the Brezis-Nirenberg problem. That is, we show there are no solutions if $\lambda \in \left( n(n-2)/4\,,\, n(n-2)/4 +L^*\right].$ Finally, in Section \ref{sec:unicidad} we show that the uniqueness of solutions follows directly from \cite{KwLi92}.


\section{Preliminaries}\label{sec:preliminaries}

The associated Legendre functions $P_{\ell}^{\alpha}(\cosh\theta)$ and $P_{\ell}^{-\alpha}(\cosh\theta)$ are solutions of the Legendre equation

\begin{equation}\label{eq:ecndeP}
 y''(\theta) +\coth\theta\, y'(\theta)+ \left( -\ell(\ell+1) - \frac{\alpha^2}{\sinh^2\theta}\right)y(\theta) = 0.
\end{equation}

We will adopt the following convention for the associated Legendre functions:

\begin{equation}\label{eq:expansionenseries}
P_{\ell}^{\alpha}(\cosh\theta)  = \frac{1}{\Gamma(1-\alpha)}\coth^{\alpha}\left(\frac{\theta}{2}\right) {}_2F_1\left[-\ell,\ell+1,1-\alpha;-\sinh^2\left(\frac{\theta}{2}\right)\right],
\end{equation}

\noindent where for complex numbers $a,$ $b,$ and $c,$ the hypergeometric function ${}_2F_1[a,b,c;z]$ is given by

\begin{equation}\label{eq:hg}
{}_2F_1[a,b,c;z] =\sum_{n=0}^{\infty} \frac{(a)_n(b)_n}{(c)_n}\frac{z^n}{n!},
\end{equation}

\noindent where $ (\beta)_n := \Pi_{j=0}^{n-1}(\beta+j),$ for $\beta \in \mathbb{C}.$  

\begin{remark}
Notice that the associated Legendre functions $P_{\ell}^{\alpha}(\cosh\theta)$ depend on $\ell$ through the product $\ell(\ell+1),$ rather than from $\ell$ and $\ell+1$ independently.  \end{remark}

\noindent The associated Legendre functions given by (\ref{eq:ecndeP}) satisfy the following raising and lowering relations (see, e.g., \cite{Ri81}, page 55, equations (20.11-1) and (20.11-2) with $x=\cosh \theta$):

\begin{equation}\label{eq:subida}
\dot P_{\ell}^{\alpha}(\cosh\theta) = \frac{1}{\sinh\theta}P_{\ell}^{\alpha+1}(\cosh\theta) + \frac{\alpha\cosh\theta}{\sinh^2\theta}P_{\ell}^{\alpha}(\cosh\theta),
\end{equation}
\noindent and
\begin{equation}\label{eq:bajada}
\dot P_{\ell}^{\alpha+1}(\cosh\theta) = \frac{\ell(\ell+1)-\alpha(\alpha+1)}{\sinh\theta}P_\ell^{\alpha}(\cosh\theta)- \frac{(\alpha+1)\cosh\theta}{\sinh^2\theta}P_{\ell}^{\alpha+1}(\cosh\theta).
\end{equation}

\noindent Here $\dot P_\ell^{\alpha}$ means the derivative of $P_{\ell}^{\alpha}$ with respect to its argument. That is, 

$$\frac{d}{d\theta}P_{\ell}^{\alpha}(\cosh \theta) = \sinh \theta \dot P_{\ell}^{\alpha}(\cosh \theta).$$
\noindent Equations (\ref{eq:subida}) and (\ref{eq:bajada}) are used in the proof of the non-existence result on Section \ref{sec:noexistencia}.


\bigskip 

\begin{definition}\label{def:losl}
Let $L = -\ell(\ell+1).$ For $2<n<4,$ $\alpha = (2-n)/2,$ and $  \theta_1 \in (0,\infty),$ let  $L_1 $ be the smallest positive value of $L$ such that $P_{\ell}^{\alpha}(\cosh\theta)>0$ if $0 < \theta<\theta_1$ and $P_{\ell}^{\alpha}(\cosh\theta_1)=0.$ Similarly, let $L^*$ be the smallest positive value of $L$ such that $P_{\ell}^{-\alpha}(\cosh\theta) >0$ if $0 < \theta<\theta_1$ and $P_{\ell}^{-\alpha}(\cosh\theta_1)=0.$

\end{definition}

\bigskip 

In the next lemma we derive an expression for the first Dirichlet eigenvalue of $-\Delta_{\mathbb{H}^n}u = \lambda u$ on a geodesic ball in terms of $L_1.$ In Lemma \ref{cl:nonempty} we use the expression for $\lambda_1$ obtained in Lemma \ref{cl:lambda1} to show that the interval of existence given in equation (\ref{eq:intervalo}) is non-empty if $2<n<4.$


\begin{lemma}\label{cl:lambda1}

The first Dirichlet eigenvalue of equation 

\begin{equation}\label{eq:Dirichletoriginal}
-u'' - (n-1)\coth \theta u' = \lambda_1 u. 
\end{equation}

\noindent is given by

 $$ \lambda_1 =   \frac{n(n-2)}{4} +L_1.$$
\end{lemma}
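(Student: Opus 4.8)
The plan is to reduce the radial eigenvalue equation (\ref{eq:Dirichletoriginal}) to the Legendre equation (\ref{eq:ecndeP}) by a weighted substitution, and then read off $\lambda_1$ from the parameter $L$. First I would set $u(\theta) = (\sinh\theta)^{\alpha} y(\theta)$ with $\alpha = (2-n)/2$. Substituting and dividing by $(\sinh\theta)^{\alpha}$, the coefficient of $y'$ becomes $(2\alpha + n - 1)\coth\theta = \coth\theta$, precisely because $\alpha = (2-n)/2$; this matches the first-order coefficient appearing in (\ref{eq:ecndeP}). Collecting the zeroth-order terms and using $\coth^2\theta = 1 + \sinh^{-2}\theta$, the $\sinh^{-2}\theta$ coefficient equals $\alpha(\alpha + n - 2) = -\alpha^2$ (again by the choice of $\alpha$), matching the singular term of the Legendre equation, while the constant term is $\lambda + \alpha(\alpha+n-1) = \lambda - n(n-2)/4$. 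Thus $y$ solves (\ref{eq:ecndeP}) with $L = -\ell(\ell+1) = \lambda - n(n-2)/4$, giving the desired relation $\lambda = n(n-2)/4 + L$.

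Next I would identify which solution of (\ref{eq:ecndeP}) is admissible. Using the series (\ref{eq:expansionenseries}) and $\coth(\theta/2)\sim 2/\theta$ as $\theta\to 0$, one gets $P_{\ell}^{\alpha}(\cosh\theta)\sim c\,\theta^{-\alpha}$, so that $u = (\sinh\theta)^{\alpha} P_{\ell}^{\alpha}(\cosh\theta)$ tends to a finite nonzero constant and is the regular (Frobenius) branch at the origin; hence it satisfies $u'(0)=0$ and lies in $H_0^1(\Omega),$ while the linearly independent solution is singular at $0$ and is excluded. Since $(\sinh\theta)^{\alpha} > 0$ for $\theta > 0,$ one has $u > 0$ on $(0,\theta_1)$ if and only if $P_{\ell}^{\alpha}(\cosh\theta) > 0$ there, and $u(\theta_1) = 0$ if and only if $P_{\ell}^{\alpha}(\cosh\theta_1)=0.$

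Finally I would invoke the Sturm/variational characterization of the first eigenvalue: the first Dirichlet eigenfunction is the unique (up to scale) eigenfunction that is positive on $(0,\theta_1).$ Because $\lambda = n(n-2)/4 + L$ is strictly increasing in $L,$ the smallest eigenvalue corresponds to the smallest value of $L$ for which the admissible solution $P_{\ell}^{\alpha}(\cosh\theta)$ is positive on $(0,\theta_1)$ and vanishes at $\theta_1$ --- that is, $L_1$ of Definition \ref{def:losl}. This yields $\lambda_1 = n(n-2)/4 + L_1.$

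The main obstacle will be the two positivity/admissibility points: (i) rigorously pinning down the behavior of $P_{\ell}^{\alpha}(\cosh\theta)$ at $\theta = 0$ to confirm that $u$ is the regular solution with $u'(0)=0$ (and that the other branch is genuinely inadmissible), and (ii) confirming that the relevant $L$ is positive, equivalently that no value $L \le 0$ produces a positive solution vanishing at $\theta_1.$ Point (ii) amounts to a non-oscillation statement for (\ref{eq:ecndeP}) when $L \le 0,$ which can be obtained from Sturm comparison together with the known positivity $\lambda_1 > n(n-2)/4$ of the bottom of the Dirichlet spectrum; this is where I expect the argument to require the most care.
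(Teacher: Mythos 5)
Your proposal is correct and follows essentially the same route as the paper: the substitution $u=\sinh^{\alpha}\theta\,v$ with $\alpha=(2-n)/2$ reducing (\ref{eq:Dirichletoriginal}) to the Legendre equation (\ref{eq:ecndeP}) with $-\ell(\ell+1)=\lambda-n(n-2)/4$, selection of $\sinh^{\alpha}\theta\,P_{\ell}^{\alpha}(\cosh\theta)$ as the regular branch, and identification of $\lambda_1$ via positivity of the first eigenfunction. The extra care you flag about ruling out $L\le 0$ and about the behavior at $\theta=0$ is a reasonable supplement to the paper's terser argument (the paper handles the former only implicitly, via the remark that McKean's bound gives $L_1\ge 1/4$), but it does not change the approach.
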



\begin{proof}

\noindent Making the change of variables $u(\theta) = \sinh^{\alpha}\theta v(\theta),$ we can write equation (\ref{eq:Dirichletoriginal}) as

$$ v''(\theta)+(2\alpha \coth\theta +(n-1)\coth\theta)v'(\theta)+ (\alpha(\alpha+n-2)\coth^2\theta +\alpha +\lambda_1)v(\theta) = 0.$$

\noindent Choosing $\alpha = \frac{2-n}{2},$ one obtains 

$$ v''(\theta) + \coth\theta \, v'(\theta) + (\alpha+\lambda_1 - \alpha^2\coth^2\theta)v(\theta) = 0.$$

\noindent That is,

 $$ v''(\theta) + \coth\theta \, v'(\theta) + \left(\lambda_1-\alpha(\alpha-1) - \frac{\alpha^2}{\sinh^2\theta}\right)v(\theta) = 0.$$
 
 The solutions to this equation are $P_\ell^{\alpha}(\cosh\theta)$ and $P_{\ell}^{-\alpha}(\cosh\theta),$  where $\ell(\ell+1) = \alpha(\alpha-1)-\lambda_1.$ Since $\alpha$ is negative if $2<n<4,$ the regular solution of (\ref{eq:Dirichletoriginal}) is 
 
 $$ u(\theta) =  \sinh^{\alpha}\theta P_\ell^{\alpha}(\cosh\theta).$$ 
 
 
 \noindent  To satisfy the boundary condition $u(\theta_1) = 0,$ while having $u(\theta)>0$ in $(0,\theta_1),$ we must choose $\ell$ such that $-\ell(\ell+1)=L_1.$ Thus, 
 
 $$ \lambda_1 =   \frac{n(n-2)}{4} + L_1.$$
 
 \end{proof}
 
 
 \begin{remark}
It is known by \cite{Mc70} that $\lambda_1 \ge \frac{(n-1)^2}{4}.$ Thus, $-L_1  \le \frac{n(n-2)}{4} - \frac{(n-1)^2}{4}  = -\frac{1}{4}.$ 

 \end{remark}
 
 \bigskip 
 

\begin{lemma}\label{cl:nonempty}
 Let $L_1$ and $L^*$ be as in Definition \ref{def:losl}. Then $L^*<L_1.$ 
\end{lemma}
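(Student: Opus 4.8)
The plan is to exploit the fact that, since equation (\ref{eq:ecndeP}) depends on the order only through $\alpha^2$, the two functions $f(\theta):=P_\ell^{\alpha}(\cosh\theta)$ and $g(\theta):=P_\ell^{-\alpha}(\cosh\theta)$, both taken with $-\ell(\ell+1)=L$, solve the \emph{same} second order equation, which in Sturm--Liouville form reads
\begin{equation*}
(\sinh\theta\, y')' + \sinh\theta\left(L - \frac{\alpha^2}{\sinh^2\theta}\right)y = 0 .
\end{equation*}
Consequently their Wronskian $W(\theta):=\sinh\theta\,(f g' - f' g)$ is constant in $\theta$. I would first reduce the claim $L^*<L_1$ to a statement about first zeros. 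Let $\tau(L)$ denote the first positive zero of $P_\ell^{-\alpha}(\cosh\theta)$; by Definition \ref{def:losl}, $\tau(L^*)=\theta_1$, while $L_1$ is characterized by $f>0$ on $(0,\theta_1)$ together with $f(\theta_1)=0$. Since $Q_L:=L-\alpha^2/\sinh^2\theta$ is increasing in the spectral parameter $L$, standard Sturm oscillation theory makes $\tau(L)$ a strictly decreasing function of $L$; hence it suffices to prove that at $L=L_1$ the function $P_\ell^{-\alpha}$ already vanishes somewhere inside $(0,\theta_1)$, i.e. that $\tau(L_1)<\theta_1$.

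To pin down the sign of $g$ I would read off the behavior near $\theta=0$ from the series (\ref{eq:expansionenseries}). Using $\coth(\theta/2)\sim 2/\theta$, one obtains $f(\theta)\sim c_1\theta^{-\alpha}$ and $g(\theta)\sim c_2\theta^{\alpha}$ with $c_1,c_2>0$; since $\alpha=(2-n)/2\in(-1,0)$ for $2<n<4$, this shows $f(0^+)=0$ while $g(0^+)=+\infty$, and that $f$ and $g$ are linearly independent. Substituting these asymptotics into $W$ gives the constant value $W\equiv 2\alpha c_1 c_2<0$ (its exact value being $2\sin(\pi\alpha)/\pi$), so the sign of the Wronskian is negative throughout.

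Now I would evaluate the constant Wronskian at $\theta_1$ for the choice $L=L_1$. Since $f(\theta_1)=0$ there,
\begin{equation*}
W = -\sinh\theta_1\, f'(\theta_1)\, g(\theta_1).
\end{equation*}
Because $f>0$ on $(0,\theta_1)$ with $f(\theta_1)=0$, and $\theta_1$ is a simple zero (a nontrivial solution of a regular second order linear ODE cannot have a double zero at the interior point $\theta_1$), we have $f'(\theta_1)<0$, so the prefactor $-\sinh\theta_1 f'(\theta_1)$ is strictly positive. As $W<0$, this forces $g(\theta_1)=P_\ell^{-\alpha}(\cosh\theta_1)<0$. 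Combined with $g(0^+)=+\infty$, the intermediate value theorem produces a zero of $P_\ell^{-\alpha}$ in $(0,\theta_1)$, whence $\tau(L_1)<\theta_1=\tau(L^*)$, and the monotonicity of $\tau$ yields $L^*<L_1$.

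I expect the main technical point to be the justification at the singular endpoint $\theta=0$: namely that $W$ is genuinely constant up to $\theta=0$, so that its value may legitimately be computed from the leading asymptotics, and that Sturm monotonicity of the first zero applies to the \emph{singular} solution $P_\ell^{-\alpha}$. Both are handled by working on $[\varepsilon,\theta_1]$ and letting $\varepsilon\to 0^+$, using that $\sinh\theta\,(fg'-f'g)$ has a finite limit at the origin. The other place to be careful is the sign bookkeeping, since $\alpha<0$ reverses several of the inequalities one would expect in the classical ($\alpha>0$) setting.
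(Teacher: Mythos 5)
Your proof is correct, but it takes a genuinely different route from the paper's. The paper forms the Wronskian of the two \emph{different} functions $y_1=P_{\ell_1}^{\alpha}$ and $y_2=P_{\ell^*}^{-\alpha}$ (different degrees, different orders), which solve equations with different spectral parameters; the identity $(W\sinh\theta)'=(L^*-L_1)\,y_1y_2\sinh\theta$ integrated over $(0,\theta_1)$ then yields the sign of $L^*-L_1$ in one stroke, since the boundary term at $\theta_1$ vanishes and the boundary term at $0$ is computed from the same asymptotics you use (both proofs ultimately reduce to $\Gamma(\alpha)\Gamma(1-\alpha)=\pi/\sin(\pi\alpha)$). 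You instead exploit that $P_\ell^{\alpha}$ and $P_\ell^{-\alpha}$ at the \emph{same} degree solve the same equation, so their Wronskian is a computable negative constant; evaluating it at $\theta_1$ for $L=L_1$ forces $P_{\ell_1}^{-\alpha}(\cosh\theta_1)<0$, hence a zero of $P_{\ell_1}^{-\alpha}$ inside $(0,\theta_1)$, and you finish with Sturm monotonicity of the first zero in $L$. Your argument costs an extra lemma (the monotonicity of $\tau$) but buys a sharper intermediate fact — the sign of the singular solution at the eigenvalue — and cleanly separates the ``connection formula'' from the comparison step. One point to tighten: in the Sturm step the boundary contribution at the singular endpoint must actually \emph{vanish}, not merely be finite; this holds because the normalization (\ref{eq:expansionenseries}) gives every $P_\ell^{-\alpha}$ the same leading coefficient $2^{-\alpha}/\Gamma(1+\alpha)\,\theta^{\alpha}$ independent of $\ell$, so for two members of the family the combination $\sinh\theta\,(g_Lg_{L'}'-g_L'g_{L'})$ is $\mathcal{O}(\theta^{2\alpha+2})\to0$ since $\alpha>-1$. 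With that said, your sign bookkeeping for $\alpha\in(-1,0)$ is consistent throughout and the conclusion $L^*<L_1$ follows.
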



\begin{proof}
Let $y_1(\theta) = P_{\ell_1}^{\alpha}(\cosh\theta),$ and $y_2(\theta) = P_{\ell^*}^{-\alpha}(\cosh\theta). $ Then $y_j,$ $j\in\{1,2\},$ satisfy  

\begin{equation}\label{eq:y12}  y_j'' + \coth\theta  y_j' + k_j y_j = 0, 
\end{equation}
\noindent where 
\begin{equation*}
k_1 = L_1 - \frac{\alpha^2}{\sinh^2\theta}.
\end{equation*}

\noindent and

\begin{equation*}
k_2 = L^* - \frac{\alpha^2}{\sinh^2\theta}.
\end{equation*}

Let $W= y_1'y_2 -  y_2' y_1 $ and $W' = y_1'' y_2 - y_1  y_2''.$ Then it follows from equation (\ref{eq:y12}) that 
$$  W' +\coth\theta\, W  = (k_2-k_1) y_1y_2.$$ 

\noindent Multiplying by $\sinh\theta$ and integrating one has that 

$$ \int_0^{\theta_1} (W\sinh\theta)'\,d\theta = \left[ L^*-L_1\right] \int_0^{\theta_1} y_1 y_2\sinh\theta\,d\theta. $$ 

\noindent By choice of $L_1$ and $L^*$ it follows that $y_1$ and $y_2$ are positive on $[0,\theta_1)$ and vanish at $\theta_1,$ so that $\displaystyle\int_0^{\theta_1} y_1 y_2\sinh\theta\,d\theta$ is positive and $W(\theta_1) = 0.$ Thus, it suffices to show that $\lim_{\theta\rightarrow 0 }W(\theta)\sinh\theta$ is negative. 

\bigskip

It follows from equation (\ref{eq:expansionenseries}) that the behavior of $y_1$ and $y_2$ near zero is 

$$ y_1 \approx \frac{1}{\Gamma(1-\alpha)}\coth^{\alpha}\left( \frac{\theta}{2}\right),$$

\noindent and 

$$ y_2 \approx \frac{1}{\Gamma(1+\alpha)}\coth^{-\alpha}\left( \frac{\theta}{2}\right).$$

\noindent Therefore, 

\begin{equation*}\begin{split}
\lim_{\theta\rightarrow 0 }W(\theta)\sinh\theta =&\, \frac{-\alpha}{\Gamma(1-\alpha)\Gamma(1+\alpha)}\lim_{\theta\rightarrow 0} \sinh\theta \left(\frac{\tanh\left(\frac{\theta}{2}\right)}{\sinh^2\left(\frac{\theta}{2}\right)} \right) \\= &\,\frac{-2\alpha}{\Gamma(1-\alpha)\Gamma(1+\alpha).}
\end{split}\end{equation*}

\noindent Finally, since $\Gamma(1+\alpha)= \alpha\Gamma(\alpha),$ $\Gamma(\alpha)\Gamma(1-\alpha) = \pi \sin^{-1}(\pi\alpha),$  and $0<\alpha<1,$ we conclude that  

$$\lim_{\theta\rightarrow 0 }W(\theta)\sinh\theta =  \frac{-2\sin(\pi\alpha)}{\pi} <0.$$

\end{proof}

From Lemmas \ref{cl:lambda1} and \ref{cl:nonempty} it follows that the interval of existence given by (\ref{eq:intervalo}), that is, $ \left( n(n-2)/4 +L^*, n(n-2)/4 +L_1\right),$ is nonempty if $2<n<4.$


\section{Existence of solutions}\label{sec:existencia}

In this section we present the proof of the following theorem: 


\begin{theorem}\label{teo:existencia}
For any $2<n<4$ and $\theta_1\in(0,\infty),$ the boundary value problem
\begin{equation}\label{eq:exi}
-u''(\theta) - (n-1)\coth \theta\, u'(\theta) = \lambda u+ u^{\frac{n+2}{n-2}}
\end{equation}
with $u\in H_0^1(\Omega),$ $u'(0) = u(\theta_1)=0,$ and $\theta\in  [0,\theta_1]$ has a positive solution if
\begin{equation*}
   \lambda\in \left( \frac{n(n-2)}{4} +L^*\,,\, \lambda_1\right).
   \end{equation*}
   
   \noindent Here $L^*$ is as in Definition \ref{def:losl}.
\end{theorem}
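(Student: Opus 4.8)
The plan is to obtain solutions as minimizers of a Rayleigh-type quotient at the critical exponent and to invoke the classical Lieb compactness lemma, which reduces the existence of a minimizer to a strict energy inequality. I work in the weighted Sobolev space on $(0,\theta_1)$ associated with the hyperbolic volume element $\sinh^{n-1}\theta\,d\theta$, so that for radial functions the Dirichlet energy, the $L^2$ mass, and the critical norm all acquire this weight. Writing $2^{*}=2n/(n-2)$, I set
$$
S_\lambda = \inf_{u\in H_0^1(\Omega),\, u\not\equiv 0} \frac{\displaystyle\int_0^{\theta_1}\bigl((u')^2 - \lambda u^2\bigr)\sinh^{n-1}\theta\,d\theta}{\Bigl(\displaystyle\int_0^{\theta_1}|u|^{2^{*}}\sinh^{n-1}\theta\,d\theta\Bigr)^{2/2^{*}}}.
$$
Because $\lambda<\lambda_1$, the numerator is a positive definite quadratic form on $H_0^1(\Omega)$ (by Lemma \ref{cl:lambda1}, $\lambda_1$ is exactly the threshold of positivity), so $S_\lambda>0$. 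A standard Euler--Lagrange computation shows that any minimizer, after scaling by a positive constant, solves (\ref{eq:exi}); thus it suffices to produce a minimizer and then argue it is positive.

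I would next record the compactness dichotomy. Let $S_n$ denote the sharp constant in the Euclidean (real-$n$) Sobolev inequality furnished by the Bliss--Talenti extremals. Since $\sinh\theta\sim\theta$ near the origin, a family concentrating at $\theta=0$ sees the flat model, which yields $S_\lambda\le S_n$ for every admissible $\lambda$. The classical Lieb lemma supplies the converse: for a minimizing sequence normalized in $L^{2^{*}}$, either the mass concentrates, forcing the quotient down to $S_n$, or the sequence is relatively compact and its weak limit is a genuine minimizer. Hence the strict inequality $S_\lambda<S_n$ guarantees that $S_\lambda$ is attained. Replacing the minimizer by its absolute value (still a minimizer), bootstrapping to classical regularity on $(0,\theta_1)$, and applying the strong maximum principle together with ODE uniqueness at interior zeros, I obtain a strictly positive solution. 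The entire theorem is thereby reduced to verifying $S_\lambda<S_n$ for every $\lambda\in(n(n-2)/4+L^{*},\lambda_1)$.

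The crux, and the step I expect to be most delicate, is the test-function construction witnessing $S_\lambda<S_n$. Since $2<n<4$ is the analogue of the critical dimension $n=3$, the truncated instanton alone is insufficient: for a bubble $U_\epsilon$ concentrating at $\theta=0$ one checks that both $\int U_\epsilon^2\,\sinh^{n-1}\theta\,d\theta$ and the gradient/boundary defect are of the same order $\epsilon^{n-2}$, so the sign of the leading correction must be computed exactly rather than estimated. I would correct $U_\epsilon$ by a multiple of the singular solution of the linearized equation $-v''-(n-1)\coth\theta\,v'=\lambda v$ built from $P_\ell^{-\alpha}(\cosh\theta)$ (whose behavior $\sim\theta^{2-n}$ at the origin matches that of the bubble), chosen so that the corrected function lies in $H_0^1(\Omega)$. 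Expanding the quotient, the constant term is $S_n$ and the coefficient of $\epsilon^{n-2}$ is a boundary Wronskian quantity controlled by the linear problem with parameter $L=\lambda-n(n-2)/4$; by Definition \ref{def:losl} this coefficient vanishes precisely at $L=L^{*}$, where $P_\ell^{-\alpha}(\cosh\theta_1)=0$, and has the favorable sign for $L>L^{*}$. This gives $S_\lambda<S_n$ exactly when $\lambda>n(n-2)/4+L^{*}$. Finally, Lemma \ref{cl:nonempty} yields $L^{*}<L_1$, so the interval (\ref{eq:intervalo}) on which the construction succeeds is nonempty, completing the existence proof.
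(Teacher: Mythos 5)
Your overall skeleton is the same as the paper's: reduce existence to the strict inequality $Q_\lambda(u)<S_n$ for some admissible $u$ via the Lieb/Brezis--Nirenberg compactness lemma, then produce a test function witnessing the inequality exactly when $\lambda>n(n-2)/4+L^*$. Where you diverge is at the decisive step. The paper does not correct the bubble by a singular solution of the linearized equation; it multiplies the bubble by an \emph{arbitrary} cutoff profile $\varphi$, expands the quotient to get $Q_\lambda(u_\epsilon)=S_n+c\,\epsilon^{(n-2)/2}F(\varphi)+o(\epsilon^{(n-2)/2})$ with $F(\varphi)=\left(\tfrac{n(n-2)}{4}-\lambda\right)\int_0^R\varphi^2r^{3-n}\rho^2\,dr+\int_0^R\varphi'^2r^{3-n}\,dr$, and then \emph{optimizes over $\varphi$}: the minimum of $\int\varphi'^2r^{3-n}\,dr$ under the constraint $\int\varphi^2r^{3-n}\rho^2\,dr=1$ is a Lagrange multiplier $\mu$, and the Euler equation for the minimizer transforms (via $r=\tanh(\theta/2)$) into the Legendre equation whose regular, boundary-vanishing solution is $\tanh^{-\alpha}(\theta/2)P_\ell^{-\alpha}(\cosh\theta)$, forcing $\mu=L^*$. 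Hence $F(\varphi_1)=\tfrac{n(n-2)}{4}-\lambda+L^*<0$ immediately, with no sign analysis of any Wronskian required. Your Jannelli-style route (matching the bubble to the singular solution built from $P_\ell^{-\alpha}$ and reading off the sign of the resulting boundary coefficient) is a legitimate alternative in principle, and it is the method used in the flat case.

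The genuine gap is that the one claim carrying the entire content of the theorem --- that the coefficient of the $\epsilon$-correction ``vanishes precisely at $L=L^*$ and has the favorable sign for $L>L^*$'' --- is asserted, not proved. To make it work you would have to (i) identify the quantity explicitly (presumably something like $-P_\ell^{-\alpha}(\cosh\theta_1)/P_\ell^{\alpha}(\cosh\theta_1)$, or the constant term in the small-$\theta$ expansion of the singular solution vanishing at $\theta_1$); (ii) rule out other roots: $P_\ell^{-\alpha}(\cosh\theta_1)=0$ may hold for values of $L$ other than $L^*$, since Definition \ref{def:losl} singles out $L^*$ by the additional positivity requirement on $(0,\theta_1)$; and (iii) prove the coefficient keeps the favorable sign on all of $(L^*,L_1)$, not merely near $L^*$ --- this needs a Sturm-type monotonicity argument in $L$, together with the fact that $P_\ell^{\alpha}(\cosh\theta_1)>0$ for $L<L_1$. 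None of this is automatic ``by Definition \ref{def:losl}.'' A smaller point: your compactness discussion and the positivity/regularity of the minimizer are fine in outline and match the paper's level of detail, so the only substantive missing piece is the sign computation above; the paper's variational identification $\mu=L^*$ is precisely the device that makes that computation unnecessary.
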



\bigskip

For natural values of $n,$ the positive solutions of

$$ -\Delta_{\mathbb{H}^n} u  = \lambda u + u^p,$$ 

\noindent on a geodesic ball with Dirichlet boundary conditions correspond to minimizers of 

$$ Q_{\lambda}(u)  =\frac{\displaystyle\int  |\nabla u|^2 \rho^{n-2}\, dx - \lambda \displaystyle\int u^2\rho^n\, dx }{\left(  \displaystyle\int u^\frac{2n}{n-2}\rho^n\, dx\right)^\frac{n-2}{n}}.$$

\noindent Here $\rho(x) = \dfrac{2}{1-|x|^2}$ is such that $ds= \rho \, dx.$ 

\bigskip

If $u$ is radial, we can write 

\begin{equation}\label{eq:cuociente}
 Q_{\lambda}(u)  =\frac{\omega_n\displaystyle\int_0^R  u'^2 \rho^{n-2}r^{n-1}\, dr - \lambda \omega_n \displaystyle\int_0^R u^2\rho^n r^{n-1}\, dr }{\left(  \omega_n \displaystyle\int_0^R u^\frac{2n}{n-2}\rho^n r^{n-1}\, dr\right)^\frac{n-2}{n}}.
 \end{equation}

\noindent Here $r = \tanh\left( \theta/2\right),$ $R = \tanh\left( \theta_1/2\right) <1,$ and $\omega_n$ represents the surface area of the unit sphere in $n$-dimensions, and is explicitly given by $\omega_n = 2\pi^\frac{n}{2}/\Gamma(n/2).$ This quotient is well defined if $n$ is a real number instead of a natural number.

\bigskip


\begin{lemma}\label{lem:primeraparte}
There exists a function $u\in H_0^1(\Omega),$ with $u'(0) = u(\theta_1)=0,$ such that $Q_{\lambda}(u) < S_n$ for all $\lambda >  \dfrac{n(n-2)}{4} +L^*.$ Here $S_n$ is the Sobolev constant. 
\end{lemma}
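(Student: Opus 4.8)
The goal of Lemma \ref{lem:primeraparte} is the standard Brezis--Nirenberg strategy: to establish existence one shows the critical quotient can be pushed strictly below the Sobolev constant $S_n$, at which point compactness is restored and a minimizer exists. So I need to construct an explicit test function $u$ with $Q_\lambda(u) < S_n$ whenever $\lambda > \frac{n(n-2)}{4} + L^*$.

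The plan is to use a truncated extremal (Talenti-type) bubble as the test function. The extremizers for the Sobolev inequality in $\mathbb{R}^n$ are the functions $U_\varepsilon(r) = c_n \varepsilon^{(n-2)/2}/(\varepsilon^2 + r^2)^{(n-2)/2}$, which realize $S_n$ in the flat quotient. First I would take such a concentrated bubble, multiply it by a suitable cutoff $\varphi(r)$ supported in the geodesic ball (so the Dirichlet condition $u(\theta_1)=0$ holds), and insert it into the quotient (\ref{eq:cuociente}), which carries the conformal weights $\rho^{n-2}$ and $\rho^n$. The numerator splits into the Dirichlet gradient term, which to leading order reproduces $S_n \cdot (\text{denominator})$ plus a controlled error of order $\varepsilon^{n-2}$, and the correction coming from the weight $\rho^{n-2}$ differing from its value at the center. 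The $-\lambda$ term contributes a negative $L^2$ integral. As is standard, the decisive competition is between the negative mass term (which in the subcritical-perturbation regime scales like $\varepsilon^2$ in the relevant range $2<n<4$) and the positive error from truncating the bubble.

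The key computation is to track the $\varepsilon$-expansion of each integral as $\varepsilon \to 0$. I expect the numerator to take the form $S_n \, (\text{denom})^{(n-2)/n} + A\,\varepsilon^{n-2} - (\lambda - \lambda_0)\,B\,\varepsilon^{2} + o(\varepsilon^{2})$ for appropriate positive constants $A,B$ and a threshold $\lambda_0$. For $2<n<4$ one has $2 < n-2 + 2$, i.e.\ the gap between the exponent $n-2$ of the positive error and the exponent $2$ of the negative mass term matters: when $2 < n-2$ fails (which it does for $n<4$), the mass term of order $\varepsilon^2$ dominates the truncation error of order $\varepsilon^{n-2}$ precisely when $n-2 > 2$ is false, so the negative term wins for small $\varepsilon$ and the quotient drops below $S_n$ as soon as $\lambda$ exceeds the threshold. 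The subtle point, and where the hyperbolic geometry enters, is identifying that threshold: I would express the coefficient $B$ and the effective lower eigenvalue in terms of the associated Legendre data, matching it against $\frac{n(n-2)}{4} + L^*$. Concretely, I would relate the leading behavior of the test function near $\theta_1$ to the Legendre function $P_\ell^{-\alpha}(\cosh\theta)$ of Definition \ref{def:losl}, since the boundary vanishing of that function is exactly what pins down $L^*$.

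The hard part, as always in Brezis--Nirenberg arguments, is the precise asymptotic bookkeeping for $2<n<4$: in this low-dimensional range the bubble is only borderline integrable (the $L^2$ norm of $U_\varepsilon$ is not finite without the cutoff, and its leading divergence is logarithmic or power-law depending on $n$), so the sign and exact order of the competing terms must be computed carefully rather than quoted from the flat case. I anticipate that the main obstacle is showing that the curvature-induced correction from $\rho^{n-2}$ and the subtraction constant $\frac{n(n-2)}{4}$ combine to shift the Euclidean threshold exactly to $\frac{n(n-2)}{4}+L^*$, which requires the Legendre asymptotics near $\theta=0$ recorded after (\ref{eq:expansionenseries}) together with the sharp value of $L^*$ from Definition \ref{def:losl}. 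Once the expansion establishes that the coefficient of the dominant term becomes negative for $\lambda$ above this threshold, choosing $\varepsilon$ small enough yields $Q_\lambda(u)<S_n$, completing the lemma.
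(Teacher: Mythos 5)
Your overall strategy (a concentrating bubble cut off to satisfy the Dirichlet condition, inserted into the quotient (\ref{eq:cuociente}), expanded in the concentration parameter) is the right one and is what the paper does, with the same conformal normalization $u_\epsilon=\rho^{(2-n)/2}v_\epsilon$. But there are two genuine problems with your account of why the threshold is $\frac{n(n-2)}{4}+L^*$.

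First, your order bookkeeping is wrong, and in a way that inverts the mechanism. You posit an expansion $S_n(\cdot)+A\epsilon^{n-2}-(\lambda-\lambda_0)B\epsilon^2+o(\epsilon^2)$ and claim that for $n<4$ "the mass term of order $\epsilon^2$ dominates the truncation error of order $\epsilon^{n-2}$." If the two contributions really occurred at those distinct orders with $n-2<2$, then $\epsilon^{n-2}\gg\epsilon^2$ as $\epsilon\to0$ and the \emph{positive} truncation error would dominate, giving $Q_\lambda>S_n$ -- the opposite of what you need. The actual structure in the critical range $2<n<4$ is that the truncation error and the $\lambda$-dependent mass term enter at the \emph{same} order $\epsilon^{(n-2)/2}$ (in the paper's normalization; see the expansion following Claim \ref{cl:segunda}), so neither dominates and one must examine the sign of their combined coefficient
\begin{equation*}
F(\varphi)=\Bigl(\tfrac{n(n-2)}{4}-\lambda\Bigr)\int_0^R\varphi^2 r^{3-n}\rho^2\,dr+\int_0^R\varphi'^2 r^{3-n}\,dr.
\end{equation*}
This same-order competition is precisely what makes $2<n<4$ critical, and your proposal does not capture it.

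Second, the identification of the threshold is the heart of the lemma and is missing. The paper gets $L^*$ not from "the leading behavior of the test function near $\theta_1$" but by minimizing $\int_0^R\varphi'^2r^{3-n}\,dr$ over cutoffs normalized by $\int_0^R\varphi^2r^{3-n}\rho^2\,dr=1$: the Euler equation $-(\varphi'r^{3-n})'=\mu\varphi r^{3-n}\rho^2$ transforms, via $\varphi=r^{(n-2)/2}v$ and $r=\tanh(\theta/2)$, into the Legendre equation (\ref{eq:leg2}), whose regular solution vanishing at the boundary forces $\mu=L^*$. Thus $F(\varphi_1)=\frac{n(n-2)}{4}-\lambda+L^*<0$ exactly when $\lambda>\frac{n(n-2)}{4}+L^*$. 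Without this variational step your argument cannot produce the stated threshold; "matching against $\frac{n(n-2)}{4}+L^*$" is the conclusion, not a proof. You would need to supply both the correct same-order expansion and this eigenvalue computation to close the gap.
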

\begin{proof}
Let $\varphi$ be an arbitrary cutoff function such that $\varphi(0)=1,$ $\varphi'(0)=0$ and $\varphi(R)=0,$ and let

$$ v_{\epsilon}(r)  = \frac{\varphi(r)}{ (\epsilon + r^2)^\frac{n-2}{2}}.$$ 

\noindent As in \cite{St02}, let

$$ u_{\epsilon}(r) = \rho^{\frac{2-n}{2}}(r)v_{\epsilon}(r).$$ 

\noindent With this choice of $u_{\epsilon},$ and after integrating by parts, we have 

\begin{equation}\label{eq:larga}\begin{split}
\displaystyle\int_0^R  u'^2 \rho^{n-2}r^{n-1}\, dr = &\, \frac{n(n-2)}{4}\displaystyle\int_0^R \rho^2 v_{\epsilon}^2 r^{n+1}\, dr+ \frac{n(n-2)}{2} \displaystyle\int_0^R v_{\epsilon}^2\rho r^{n-1}\, dr \\+&\, \displaystyle\int_0^R v_{\epsilon}'^2 r^{n-1}\, dr. \\
\end{split}\end{equation}

\noindent Using the fact that $r^2 + \dfrac{2}{\rho} = 1$ to combine the first two terms of equation (\ref{eq:larga}), it follows that, 

\begin{equation}\label{eq:qlambda}
 Q_{\lambda}(u_{\epsilon}) = \frac{   \omega_n\left( \frac{n(n-2)}{4}-\lambda\right)   \displaystyle\int_0^R v_{\epsilon}^2 \rho^2 r^{n-1}\, dr + \omega_n\displaystyle\int_0^R v_{\epsilon}'^2 r^{n-1}\, dr   }{ \left( \omega_n \displaystyle\int_0^R v_{\epsilon}^\frac{2n}{n-2} r^{n-1}\, dr\right)^\frac{n-2}{2}}.
 \end{equation}


\begin{claim}

\begin{equation*}\begin{split}
 \omega_n\left( \frac{n(n-2)}{4}-\lambda\right)   \displaystyle\int_0^R v_{\epsilon}^2 \rho^2 r^{n-1}\, dr =  &\, \omega_n \left( \frac{ n(n-2)}{4}-\lambda\right) \displaystyle\int_0^R \varphi^2 r^{3-n}\rho^2\, dr\\ &\,+\mathcal{O}\left( \epsilon^\frac{4-n}{2}\right).
 \end{split}\end{equation*}

\end{claim}

\begin{proof}

Let 

$$I(\epsilon) = \int_0^R v_{\epsilon}^2\rho^2r^{n-1}\, dr = \int_0^R \frac{\varphi^2}{(\epsilon+r^2)^{n-2}}\rho^2r^{n-1}\, dr.$$

\noindent Then $I(0) = \displaystyle\int_0^R \varphi^2 \rho^2 r^{3-n}\, dr.$ Thus, it suffices to show that $ |I(\epsilon)-I(0)| = \mathcal{O}\left( \epsilon^\frac{4-n}{2}\right).$

\bigskip

If $0<r<R<1,$ then $\rho(r) = \dfrac{2}{1-r^2} < \dfrac{2}{1-R^2}.$ Thus, 

\begin{equation*}\begin{split}
|I(\epsilon) - I(0)| \le & \, \frac{4}{(1-R^2)^2} \left\vert \displaystyle\int_0^R \varphi^2r^{n-1} \left( \frac{1}{(\epsilon+r^2)^{n-2}} -\frac{1}{r^{2(n-2)}}\right)\, dr\right\vert \\=&\, \frac{4(n-2)}{(1-R^2)^2} \left\vert \displaystyle\int_0^R \displaystyle\int_0^\epsilon \frac{\left(\varphi^2-1+1\right)r^{n-1}}{(a+r^2)^{n-1}} \, da\,dr\right\vert. \\
\end{split}\end{equation*}

\noindent Let

$$ L_1(\epsilon) = \int_0^{\epsilon} \left(  \displaystyle\int_0^R  \frac{r^{n-1}}{(a+r^2)^{n-1}}\,dr \right) \, da, $$

\noindent and

$$ L_2(\epsilon) = \displaystyle\int_0^R (\varphi^2-1) r^{n-1}\int_0^{\epsilon} \frac{1}{(a+r^2)^{n-1}}\, da\,dr. $$

\noindent Making the change of variables $ r= u\sqrt{a}$ in the inner integral of $L_1(\epsilon)$, we have

$$ \displaystyle\int_0^R  \frac{r^{n-1}}{(a+r^2)^{n-1}}\,dr  = a^\frac{2-n}{2} \int_0^{\frac{R}{\sqrt{a}}} \frac{u^{n-1}}{(1+u^2)^{n-1}}\, du\le a^\frac{2-n}{2} \int_0^{\infty} \frac{u^{n-1}}{(1+u^2)^{n-1}}\, du.$$

\noindent Since we are considering $n > 2,$ this last integral converges. Thus, and since $n<4,$ 

$$ L_1(\epsilon) \le C\int_0^{\epsilon} a^\frac{2-n}{2} da  = \mathcal{O}\left( \epsilon^\frac{4-n}{2}\right). $$

On the other hand, since $\varphi (0)=1$ and $\varphi '(0) = 0,$ for $0\le r < 1$ we have that $\varphi^2-1 \le Cr^2$ for some $C>0.$ Thus, 

\begin{equation*}\begin{split}
 L_2(\epsilon) \le & C \displaystyle\int_0^R  r^{n+1}\int_0^{\epsilon} \frac{1}{(a+r^2)^{n-1}}\, da\,dr \\
 \le &  C \displaystyle\int_0^R  r^{n+1}\int_0^{\epsilon} \frac{1}{r^{2n-2}}\, da\,dr = C\epsilon \displaystyle\int_0^Rr^{3-n}\, dr.\\
 \end{split}\end{equation*}
 
 \noindent Since $n<4,$ this last integral converges. Thus $L_2(\epsilon) = \mathcal{O}(\epsilon),$ and in particular $\mathcal{O}(\epsilon^\frac{4-n}{2}).$

\end{proof}


\begin{claim}\label{cl:primera}

\begin{equation*}
\omega_n \displaystyle\int_0^R v_{\epsilon}'^2 r^{n-1}\, dr = \omega_n \int_0^R \varphi '(r)^2r^{3-n}\,dr +{K_1}{\epsilon^\frac{2-n}{2}} + \mathcal{O}(\epsilon^\frac{4-n}{2}),
\end{equation*}

\noindent where

\begin{equation*}
K_1 =\frac{\pi^\frac{n}{2} n(n-2)\Gamma\left(\frac{n}{2}\right)}{\Gamma(n)}. 
\end{equation*}

\end{claim}


\begin{proof}

Let 
$$ J= \omega_n \displaystyle\int_0^R v_{\epsilon}'^2 r^{n-1}\, dr.$$

\noindent Then we can write

$$J = \omega_n \displaystyle\int_0^R r^{n-1} \left[ \frac{\varphi '^2}{(\epsilon+r^2)^{n-2}} - \frac{2(n-2)r\varphi \varphi '}{(\epsilon+r^2)^{n-1}} + \frac{r^2\varphi ^2(n-2)^2}{(\epsilon+r^2)^n} \right] \, dr.$$

\noindent Integrating by parts the second term, and since by hypothesis $\varphi (R)=0,$ we have 

\begin{equation*}\begin{split}
J = & \omega_n \displaystyle\int_0^R \frac{\varphi '^2r^{n-1}}{(\epsilon+r^2)^{n-2}}\, dr + \omega_nn(n-2)\displaystyle\int_0^R \frac{\varphi ^2r^{n-1}}{(\epsilon+r^2)^{n-1}} \, dr \\ & - 2\omega_n(n-2)(n-1)\displaystyle\int_0^R \frac{\varphi ^2r^{n+1}}{(\epsilon + r^2)^n}\, dr +\omega_n(n-2)^2 \displaystyle\int_0^R \frac{\varphi ^2r^{n+1}}{(\epsilon+r^2)^n}\, dr. \\
\end{split}\end{equation*}

\noindent Thus, since $(n-2)^2-2(n-2)(n-1) = -n(n-2),$ combining the last three terms we have

\begin{equation}\label{eq:antesdeI}
J =  \omega_n \displaystyle\int_0^R \frac{\varphi '^2r^{n-1}}{(\epsilon+r^2)^{n-2}}\, dr + \omega_nn(n-2) \epsilon \displaystyle\int_0^R \frac{\varphi ^2r^{n-1}}{(\epsilon+r^2)^{n}} \, dr.
\end{equation}

\bigskip

Let us now estimate
\begin{equation*}
J_1(\epsilon) \equiv \int_0^R \varphi'(r)^2 (\epsilon + r^2)^{2-n} r^{n-1} \, dr.
\end{equation*}

\noindent Notice that 
\begin{equation*}
J_1(0) = \int_0^R \varphi'(r)^2 r^{3-n} \, dr.
\end{equation*}
In what follows we estimate the difference, i.e., $\Delta(\epsilon)\equiv J_1(\epsilon)-J_1(0)$. 
We write, 
$$
\Delta(\epsilon) = \int_0^1 \varphi'(r)^2 r^{3-n} (-A) \, dr, 
$$
where 
$$
A = 1-(\epsilon + r^2)^{2-n} \, r^{2n-4} = 1 - (1+\epsilon r^{-2})^{2-n} > 0,
$$
since $n>2.$ Using the fact that 
\begin{equation*}
(1+x)^{-m}> 1- m x
\end{equation*}
for $x =\epsilon/r^2 \ge 0$ and $m=n-2>0,$ we conclude that
$$
A < (n-2) \, \epsilon \, r^{-2}.
$$
Thus, 
\begin{equation}
|\Delta(\epsilon)| < \epsilon (n-2) \int_0^R \varphi'(r)^2 r^{1-n} \, dr. 
 \label{eq:es4}
 \end{equation}
 
 Notice that the integral on equation (\ref{eq:es4}) converges. In fact, since $\varphi (0)=1$ and $\varphi '(0) = 0,$ for $0\le r < 1$ one has $\varphi'(r)^2 \le C^2 r^2$ for some positive constant $C;$ thus $
 \varphi'(r)^2 r^{1-n} \le C  r^{3-n},$ which is integrable near $0$ for all $2 < n <4$. Hence $|\Delta(\epsilon)| = \mathcal{O}(\epsilon).$ Thus, from equation (\ref{eq:antesdeI}) we have 
 
 \begin{equation}\label{eq:despuesdeI}
J =  \omega_n \int_0^R \varphi '^2r^{3-n}\, dr + \omega_nn(n-2) \epsilon \int_0^R \frac{\varphi ^2r^{n-1}}{(\epsilon+r^2)^{n}} \, dr +\mathcal{O}(\epsilon).
\end{equation}

 Now let 

\begin{equation*}
J_2(\epsilon) \equiv \int_0^R \frac{\left(\varphi ^2-1\right)r^{n-1}+r^{n-1}}{(\epsilon+r^2)^{n}} \, dr. 
\end{equation*}

\noindent Making the change of variables $r=s\sqrt{\epsilon},$ we have 

\begin{equation*}
 \int_0^R \frac{r^{n-1}}{(\epsilon+r^2)^{n}} \, dr =\epsilon^\frac{-n}{2}\left(\int_0^{\infty} \frac{s^{n-1}}{(1+s^2)^n}\, ds - \int_{\frac{R}{\sqrt{\epsilon}}}^{\infty} \frac{s^{n-1}}{(1+s^2)^n}\, ds \right).
 \end{equation*}

\noindent But

\begin{equation*}
 \int_{\frac{R}{\sqrt{\epsilon}}}^{\infty} \frac{s^{n-1}}{(1+s^2)^n}\, ds \le  \int_{\frac{R}{\sqrt{\epsilon}}}^{\infty} s^{-n-1}\,ds = \frac{\epsilon^{\frac{n}{2}}}{nR^n}.
 \end{equation*}

 \noindent Notice that making the change of variables $u = s^2,$ we can write
 
 \begin{equation*}
  \int_0^{\infty} \frac{s^{n-1}}{(1+s^2)^n}\, ds = \frac{1}{2} \int_0^{\infty} \frac{u^{\frac{n}{2}-1}}{(1+u)^n}\,du = \frac{1}{2} \frac{\Gamma\left(\frac{n}{2}\right)^2}{\Gamma(n)}.
 \end{equation*}
 
 \noindent Here we have used the standard integral 
 
 \begin{equation*}
 \int_0^{\infty} \frac{x^{k-1}}{(1+x)^{k+m}}\, dx  = \frac{\Gamma(k)\Gamma(m)}{\Gamma(k+m)}
 \end{equation*}
 
 \noindent (see, e.g., \cite{Dw61}, equation 856.11, page 213), which holds for all $m,$ $k\,>0.$  Thus, 
 
 \begin{equation}\label{eq:repetida}
 \int_0^R \frac{r^{n-1}}{(\epsilon+r^2)^{n}} \, dr =  \frac{\Gamma\left(\frac{n}{2}\right)^2\epsilon^\frac{-n}{2}}{2\Gamma(n)} +\mathcal{O}(1).
 \end{equation}

 \bigskip

 On the other hand, since $\varphi^2(r) \le1+Cr^2,$ and setting once more $r=s\sqrt{\epsilon},$ we have that 

 

 \begin{equation*}
 \int_0^R \frac{(\varphi ^2-1)r^{n-1}}{(\epsilon+r^2)^{n}} \, dr \le C \epsilon^{\frac{2-n}{2}}\left(  \int_0^{\infty} \frac{s^{n+1}}{(1+s^2)^n}\, ds  -   \int_{\frac{R}{\sqrt{\epsilon}}}^{\infty} \frac{s^{n+1}}{(1+s^2)^n}\, ds \right). 
 \end{equation*}
 
 \noindent But 
 
 \begin{equation*}
  \int_{\frac{R}{\sqrt{\epsilon}}}^{\infty} \frac{s^{n+1}}{(1+s^2)^n}\, ds \le  \int_{\frac{R}{\sqrt{\epsilon}}}^{\infty} s^{1-n}\, ds = \mathcal{O}\left(\epsilon^{\frac{n-2}{2}}\right),
 \end{equation*}
 
 \noindent and $ \displaystyle\int_0^{\infty} \frac{s^{n+1}}{(1+s^2)^n}\, ds$ is finite. Thus, and since $2<n<4,$
 
  \begin{equation}\label{eq:ii22final}
  \int_0^R \frac{(\varphi ^2-1)r^{n-1}}{(\epsilon+r^2)^{n}} \, dr \le   C\int_0^R \frac{r^{n+1}}{(\epsilon+r^2)^{n}} \, dr =  \mathcal{O}( \epsilon^{\frac{2-n}{2}}).
  \end{equation}

\noindent  Therefore, from equations (\ref{eq:repetida}) and (\ref{eq:ii22final}) it follows that 
   
   \begin{equation*}
   J_2(\epsilon)  = \frac{\Gamma\left(\frac{n}{2}\right)^2\epsilon^{\frac{-n}{2}}}{2\Gamma(n)} + \mathcal{O}( \epsilon^{\frac{2-n}{2}}).  \end{equation*}
 
Finally, from equation (\ref{eq:despuesdeI}) it follows that 

 \begin{equation*}
J =  \omega_n \int_0^1 \varphi '^2r^{3-n}\, dr + \omega_nn(n-2) \epsilon^\frac{2-n}{2}\left( \frac{\Gamma\left(\frac{n}{2}\right)^2}{2\Gamma(n)}\right) +\mathcal{O}(\epsilon^\frac{4-n}{2}).
\end{equation*}

\noindent But we are taking $\omega_n =\frac{2\pi^\frac{n}{2}}{\Gamma\left(\frac{n}{2}\right)}.$ Thus, 

\begin{equation*}
J =  \omega_n \int_0^1 \varphi '^2r^{3-n}\, dr +  \epsilon^\frac{2-n}{2}\left( \frac{ n(n-2)\pi^\frac{n}{2}\Gamma\left(\frac{n}{2}\right)}{\Gamma(n)}\right) +\mathcal{O}(\epsilon^\frac{4-n}{2}).
\end{equation*}

 \end{proof}


 \begin{claim}\label{cl:segunda}
 
 \begin{equation*}
  \left( \omega_n \displaystyle\int_0^R v_{\epsilon}^\frac{2n}{n-2} r^{n-1}\, dr\right)^\frac{n-2}{n}= \epsilon^\frac{2-n}{2} K_2 + \mathcal{O}(\epsilon^\frac{4-n}{2}),
\end{equation*}
\noindent where 

\begin{equation*}
K_2  = \left( \pi^{n/2} \frac{\Gamma(n/2)}{\Gamma(n)}\right)^\frac{n-2}{n}.
\end{equation*}

 \end{claim}
 
 
 \begin{proof}
 
 Let 
 \begin{equation*}
H(\epsilon) \equiv \omega_n \displaystyle\int_0^R v_{\epsilon}^\frac{2n}{n-2} r^{n-1}\, dr = \omega_n \displaystyle\int_0^R \frac{\varphi(r)^{2n/(n-2)}}{(\epsilon + r^2)^{n}} \, r^{n-1} dr.
\end{equation*}

\noindent  Since $\varphi(0)=1$, this integral diverges when $\epsilon \to 0$. Denote by $H_1$ the leading behavior of $H(\epsilon)$, that is, 
 \begin{equation*}
 H_1(\epsilon) = \omega_n \int_0^R \frac{r^{n-1}}{(\epsilon + r^2)^{n}} \,  dr. 
 \end{equation*}

\noindent As in equation (\ref{eq:repetida}), we have 
 \begin{equation}
 H_1(\epsilon) =c_n \epsilon^{-n/2} + O(1), 
 \label{eq:es12}
 \end{equation}
where 
\begin{equation}
c_n  = \frac{\omega_n}{2} \frac{\Gamma(n/2)^2}{\Gamma(n)} = \pi^{n/2} \frac{\Gamma(n/2)}{\Gamma(n)}.
\label{eq:es13}
\end{equation}
\bigskip
 
It suffices now to show that 
\begin{equation*}
H(\epsilon) - H_1(\epsilon) = \omega_n \int_0^R \frac{\varphi(r)^{2n/(n-2)}-1}{(\epsilon + r^2)^{n}} \, r^{n-1} dr =\mathcal{O}\left(\epsilon^\frac{2-n}{2}\right).
\end{equation*}

\noindent But since $\varphi(r) \le 1+ C \, r^2$ for some positive constant $C,$ then 
\begin{equation}
|H(\epsilon) - H_1(\epsilon) |  \le C_n \int_0^R \frac{r^{n+1}}{(\epsilon + r^2)^{n}} \,  dr = \mathcal{O}\left(\epsilon^\frac{2-n}{2}\right),
\label{eq:es15}
\end{equation}

\noindent where the last equality follows from equation (\ref{eq:ii22final}). Thus, from (\ref{eq:es12}) and (\ref{eq:es15}), we conclude that
\begin{equation*}
H(\epsilon) = \epsilon^{-n/2} [c_n + O(\epsilon)], 
\end{equation*}
where $c_n$ is given by (\ref{eq:es13}).
 
 \end{proof}


Replacing the estimates obtained in the three previous claims in the definition of $Q_{\lambda}(u_{\epsilon})$ given in equation (\ref{eq:qlambda}), we obtain 

\begin{equation*}\begin{split}
Q_{\lambda}(u_{\epsilon}) =&\, \frac{K_1}{K_2} + \frac{\epsilon^\frac{n-2}{2}\omega_n}{K_2} \left(\left( \frac{n(n-2)}{4}-\lambda\right) \displaystyle \int_0^R \varphi^2 r^{3-n}\rho^2\, dr +\displaystyle\int_0^R \varphi'^2 r^{3-n}\, dr \right) \\ &\,+ \mathcal{O}(\epsilon).\\
\end{split}\end{equation*}

\noindent Here 

$$
K_1 =\frac{\pi^\frac{n}{2} n(n-2)\Gamma\left(\frac{n}{2}\right)}{\Gamma(n)},
$$

\noindent and 

$$
K_2  = \left( \pi^{n/2} \frac{\Gamma(n/2)}{\Gamma(n)}\right)^\frac{n-2}{n}.
$$

\noindent But 

$$ \frac{K_1}{K_2} = \pi n (n-2) \left( \frac{\Gamma\left(\frac{n}{2}\right)}{\Gamma(n)}\right)^\frac{2}{n},$$

\noindent which is precisely the Sobolev critical constant $S_n$ (see, e.g., \cite{Ta76}, with $p=2,$ $m=n$ and $q= \frac{2n}{n-2}$). Therefore, to conclude that $Q_{\lambda}(u_{\epsilon}) <S_n,$ it suffices to show that for $\lambda > n(n-2)/4 +L^*,$ there exists a choice of $\varphi$ such that

$$ F(\varphi) \equiv \left(\frac{n(n-2)}{4}-\lambda\right) \displaystyle \int_0^R \varphi^2 r^{3-n}\rho^2\, dr +\displaystyle\int_0^R \varphi'^2 r^{3-n}\, dr $$

\noindent is negative. 

\bigskip

Let 

$$ M(\vf) = \displaystyle\int_0^R \varphi'^2 r^{3-n}\, dr ,$$

\noindent and let $\vf_1$ be the minimizer of $M(\vf)$ subject to the constraint  $\displaystyle \int_0^R \varphi^2 r^{3-n}\rho^2\, dr =1.$ Then $\vf_1$ satisfies the Euler equation 

\begin{equation}\label{eq:ecneuler}
 -\left( \vf_1'r^{3-n}\right)' = \mu \vf_1r^{3-n}\rho^2.
 \end{equation}

 \noindent Here $\mu$ is the Lagrange multiplier. Multiplying equation (\ref{eq:ecneuler}) by $\varphi_1$ and integrating this equation by parts, and since $\displaystyle \int_0^R \varphi_1^2 r^{3-n}\rho^2\, dr =1,$ we obtain 
 
 \begin{equation}\label{eq:mupos}
  \displaystyle \int_0^R \vf_1'^2r^{3-n}\, dr = \mu.
  \end{equation}
 
 \noindent It follows that $F(\vf_1) = \dfrac{n(n-2)}{4}-\lambda  +\mu.$ Thus, $F(\vf_1)$ is negative as long as $\lambda > \dfrac{n(n-2)}{4}+\mu.$  Notice that from (\ref{eq:mupos}) one has that $\mu$ is positive. 
 \bigskip

 It suffices now to show that $\mu = L^*.$ Multiplying equation (\ref{eq:ecneuler}) by $-r^{n-3},$ we obtain 
 
 \begin{equation}\label{eq:estrella}
 \vf'' + \frac{(3-n)}{r}\vf' + \mu\varphi\rho^2 = 0.
 \end{equation}
 
 \noindent Making the change of variables $\vf(r) = r^\frac{n-2}{2}v(r),$ and after some rearrangement of terms, we can write equation (\ref{eq:estrella}) as 
 
 \begin{equation}\label{eq:leg1}
  v'' + \frac{v'}{r} + \left( \mu\rho^2 - \frac{(n-2)^2}{4r^2}\right) v =0. 
 \end{equation}
 
Changing back to geodesic coordinates, and since $r = \tanh \frac{\theta}{2},$ we can rewrite equation (\ref{eq:leg1}) as 

\begin{equation}\label{eq:leg2}
 v''  +  \coth\theta  v'  + \left(\mu - \frac{\alpha^2}{\sinh^2\theta} \right)v = 0,
\end{equation}
 
 \noindent where $\alpha = \frac{2-n}{2}.$ Equation (\ref{eq:leg2}) is a Legendre equation, whose solutions are $P_{\ell}^{\alpha}$ and $P_{\ell}^{-\alpha},$ where $-\ell(\ell+1) = \mu.$ It follows from equation (\ref {eq:expansionenseries}) that the regular solution to equation (\ref{eq:estrella}) is  
 $$\vf(\theta) = \tanh^{-\alpha} \left(\dfrac{\theta}{2}\right) P_{\ell}^{-\alpha}(\cosh\theta).$$
 
 \noindent Since the solution must vanish at the boundary, it follows that $L = L^*.$ Thus, $\mu = L^*.$ This finishes the proof of Lemma \ref{lem:primeraparte}.

 \end{proof}

 
 The proof of Theorem \ref{teo:existencia} now follows easily from a result by Lieb. In fact, by Lemma 1.2 in \cite{BrNi83}, it follows that if there exists some $u$ such that $Q_{\lambda}(u)<S_n,$ then there exists a minimizer of $Q_{\lambda}.$ Given any constant $\eta>0,$ the quotient $Q_{\lambda}(u)$ is invariant under the transformation $u\rightarrow \eta u.$ In order to compute the corresponding Euler equation, we minimize the numerator of equation (\ref{eq:cuociente}) subject to the constraint $ \omega_n \displaystyle\int_0^R u^\frac{2n}{n-2}\rho^n r^{n-1}\, dr=1.$ We obtain 
 
 \begin{equation}\label{ec:euler2}
 \left( u'\rho^{n-2}r^{n-1}\right)' + \lambda u \rho^n r^{n-1} + \eta u^p \rho^n r^{n-1}=0,
 \end{equation}
 
 \noindent where $\eta$ is a Lagrange multiplier. Multiplying through by $\omega_n u,$ integrating between $0$ and $R,$ and integrating by parts, we obtain 
 
 \begin{equation*}\begin{split}
 \eta &\, = \omega_n \left(\int_0^R u'^2 \rho^{n-2} r^{n-1}\, dr - \lambda  \int_0^R u^2 \rho^n r^{n-1}\, dr \right) \\&\, \ge (\lambda_1-\lambda) \omega_n \int_0^R u^2 \rho^n r^{n-1}\, dr.\\
 \end{split}\end{equation*}
 
 \noindent This last inequality follows from the variational characterization of $\lambda_1.$  It follows that $\eta > 0$ provided that $\lambda<\lambda_1.$ Setting $u = \eta^\frac{-1}{p-1}v$ in (\ref{ec:euler2}) one has that $v$ satisfies 
 
 \begin{equation}\label{eq:penultima}
   \left( u'\rho^{n-2}r^{n-1}\right)' + \lambda u \rho^n r^{n-1} +  u^p \rho^n r^{n-1}=0.
   \end{equation}
 
 \noindent Finally, setting $r =\tanh \frac{\theta}{2},$ equation (\ref{eq:penultima}) becomes (\ref{eq:exi}). This finishes the proof of Theorem \ref{teo:existencia}.


\section{Nonexistence of solutions} \label{sec:noexistencia}

In this section we use a Pohozaev type argument to show that if $2<n<4$ then problem (\ref {eq:ode}) has a solution gap.

\begin{theorem}\label{lem:noexistencia}

For any $2<n<4$ and $\theta_1\in(0,\infty),$ the boundary value problem
\begin{equation}\label{ecn:1}
-u''(\theta) - (n-1)\coth \theta\, u'(\theta) = \lambda u+ u^{\frac{n+2}{n-2}}
\end{equation}
\noindent with $u\in H_0^1(\Omega),$ $u'(0) = u(\theta_1)=0,$ and $\theta\in  [0,\theta_1],$ has no solution if
\begin{equation}
   \lambda\in \left(\frac{n(n-2)}{4}\,,\, \frac{n(n-2)}{4} +L^* \right].
   \end{equation}
   
   \noindent Here $L^*$ is as in Definition \ref{def:losl}. 

\end{theorem}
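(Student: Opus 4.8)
The plan is to argue by contradiction through a Pohozaev-type integral identity, choosing the multiplier so that the critical nonlinearity cancels and the remaining quadratic form detects the threshold $L^*$. Suppose that, for some $\lambda$ in the stated interval, a positive solution $u$ of (\ref{ecn:1}) exists. First I would recast the equation in self-adjoint form,
\[
-\left(\sinh^{n-1}\theta\,u'\right)' = \left(\lambda u + u^{\frac{n+2}{n-2}}\right)\sinh^{n-1}\theta ,
\]
write $S(\theta)=\sinh^{n-1}\theta$, and test it against a multiplier $M(\theta)u'+N(\theta)u$, integrating by parts on $[0,\theta_1]$ and using $u'(0)=u(\theta_1)=0$ together with the regularity of $u$ at $0$. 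Imposing the relation $N = (MS)'/\big((p+1)S\big)$ with $p=\tfrac{n+2}{n-2}$ makes the two contributions proportional to $\int u^{p+1}$ cancel identically; this is precisely where criticality of the exponent is used.

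After this cancellation the identity collapses to
\[
\frac{M(\theta_1)S(\theta_1)}{2}\,u'(\theta_1)^2 \;=\; \int_0^{\theta_1} S(\theta)\,P(\theta)\,u'^2\,d\theta \;+\; \int_0^{\theta_1} Q(\theta)\,u^2\,d\theta ,
\]
where $P=\tfrac{n-1}{n}\sinh\theta\,(M/\sinh\theta)'$ and $Q$ is a differential expression in $M$ and $\lambda$. Taking $M=\sinh\theta$ gives $P\equiv 0$ and $Q=\big(\lambda-\tfrac{n(n-2)}{4}\big)S\cosh\theta$, which recovers only the weak bound $\lambda>\tfrac{n(n-2)}{4}$. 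To reach the sharp threshold I would instead force the multiplier to vanish at the endpoint, $M(\theta_1)=0$, so the boundary term on the left disappears, and build $M$ from the Legendre function $P_{\ell}^{-\alpha}(\cosh\theta)$ with $-\ell(\ell+1)=L^*$ (through the change of variables of Lemma \ref{cl:lambda1}); by Definition \ref{def:losl} this function is positive on $(0,\theta_1)$ and vanishes at $\theta_1$, matching $M(\theta_1)=0$, and it is essentially the first eigenfunction of the weighted problem (\ref{eq:ecneuler})--(\ref{eq:leg2}) of Section \ref{sec:existencia}.

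With the boundary term gone the identity reads $\int_0^{\theta_1}SP\,u'^2 + \int_0^{\theta_1}Q\,u^2 = 0$, so the heart of the proof is to show that, for $\lambda\le \tfrac{n(n-2)}{4}+L^*$, the left member is strictly positive, yielding the contradiction $0>0$. I expect to establish this by reducing $Q$ with the raising and lowering relations (\ref{eq:subida})--(\ref{eq:bajada}): these rewrite the derivatives of $P_{\ell}^{-\alpha}$ occurring in $Q$ in terms of neighbouring Legendre functions, collapsing $Q$ into a tractable form, after which the $u'^2$ and $u^2$ terms are combined using the variational characterization of $L^*$ (the Rayleigh quotient minimized in Section \ref{sec:existencia}). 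The constant $L^*$ then enters exactly as the value up to and including which this positivity survives, accounting for the closed right endpoint of the gap.

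The main obstacle is this sign analysis: proving that $\int SP\,u'^2+\int Q\,u^2>0$ precisely when $\lambda\le \tfrac{n(n-2)}{4}+L^*$, which is where the explicit identities (\ref{eq:subida})--(\ref{eq:bajada}) are indispensable and where $L^*$ is pinned down. A secondary but genuine difficulty, specific to $2<n<4$ (so that $\alpha=(2-n)/2\in(-1,0)$), is the singular endpoint $\theta=0$: the chosen multiplier is singular there, so I must verify, using the $\theta\to 0$ expansion (\ref{eq:expansionenseries}) exactly as in the proof of Lemma \ref{cl:nonempty}, that every integral converges and that all boundary contributions generated by the integrations by parts vanish at $\theta=0$, thereby legitimizing the identity.
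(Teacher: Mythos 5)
Your overall strategy (a Pohozaev-type identity with a multiplier $Mu'+Nu$ built from associated Legendre functions, with attention to the singular endpoint $\theta=0$ and to the closed right endpoint of the gap) is the right family of ideas, and your formula $P=\tfrac{n-1}{n}\sinh\theta\,(M/\sinh\theta)'$ is what that computation gives. But the proposal has a genuine gap, and it sits exactly where the theorem lives: you normalize $N=(MS)'/\big((p+1)S\big)$ so that the $u^{p+1}$ terms cancel, reduce everything to the claim that $\int_0^{\theta_1}SP\,u'^2+\int_0^{\theta_1}Q\,u^2>0$ whenever $\lambda\le \tfrac{n(n-2)}{4}+L^*$, and then announce that claim as ``the main obstacle'' without proving it. That positivity statement is the entire mathematical content of the theorem --- it is what would distinguish $L^*$ from, say, $L_1$ --- and nothing in the proposal pins it down. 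Moreover the specific mechanism you sketch is doubtful: (i) you fix $M$ as a $\lambda$-independent multiplier built from the $L^*$-eigenfunction, whereas in working versions of this argument the multiplier must move with $\lambda$; (ii) the eigenfunction of (\ref{eq:ecneuler}) minimizes the Rayleigh quotient $\int\varphi'^2r^{3-n}\,dr\big/\int\varphi^2r^{3-n}\rho^2\,dr$, which is not the quadratic form $\int SP\,u'^2+\int Q\,u^2$ your identity produces, so ``the variational characterization of $L^*$'' does not simply plug in; and (iii) at the closed endpoint $\lambda=\tfrac{n(n-2)}{4}+L^*$ any such quadratic form should degenerate (acquire a kernel), so strict positivity fails there and an additional argument is needed to exclude equality.

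For comparison, the paper allocates the cancellations the other way around: it relates the two multipliers by $f=\tfrac12 g'\sinh^{n-1}\theta$ so that the $u'^2$ terms cancel, then chooses $g$ (equivalently $T=g\sinh^2\theta=\sinh^{4-n}\theta\,P_\ell^{\alpha}P_\ell^{-\alpha}$ with $\ell(\ell+1)=\alpha(\alpha-1)-\lambda$, so the test function depends on $\lambda$) so that the coefficient $A$ of $u^2$ vanishes identically, leaving $\int B\,u^{p+1}=\tfrac12\sinh^{2n-2}\theta_1\,u'(\theta_1)^2 g(\theta_1)\ge 0$. The contradiction comes from showing $B<0$ on $(0,\theta_1)$, which via the raising relation (\ref{eq:subida}) reduces to $\dot P_\ell^{\alpha}/P_\ell^{\alpha}+\dot P_\ell^{-\alpha}/P_\ell^{-\alpha}<0$, proved by a Riccati argument; $L^*$ enters only through the requirement that $P_\ell^{\alpha}P_\ell^{-\alpha}>0$ on $(0,\theta_1)$, which holds exactly for $0<L\le L^*$. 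To complete your version you would have to supply a genuinely new positivity proof for your quadratic form (including the degenerate endpoint case); otherwise you should switch to cancelling the $u^2$ term and keeping the nonlinearity, as the paper does.
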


\begin{proof}
Let $g$ be a smooth nonnegative function such that $g(0) = g'(0)=0.$ Writing equation (\ref{ecn:1}) as

\begin{equation}\label{ecn:2}
\frac{-(\sinh^{n-1}\theta\, u')'}{\sinh^{n-1}\theta} = \lambda u + u^p,
\end{equation}

\noindent multiplying through by  $g(\theta)u'(\theta)\sinh^{2n-2}\theta,$ and integrating, we obtain

\begin{equation*}\begin{split}
- \int_0^{\theta_1} \left( \frac{(\sinh^{n-1}\theta\, u')^2}{2}\right)'g\, d\theta  = &\,   \lambda \int_0^{\theta_1}\left(\frac{u^2}{2}\right)'g\sinh^{2n-2}\theta\, d\theta \\ &\,
+\int_0^{\theta_1}\left(\frac{u^{p+1}}{p+1}\right)'g\sinh^{2n-2}\theta\,d\theta.\\
\end{split}\end{equation*}

Integrating by parts, and since $u(\theta_1) = 0,$ we obtain 

\begin{equation}\label{ecn:intpp1}\begin{split}
&\frac{1}{2}\int_0^{\theta_1}u'^2  g'\sinh^{2n-2} \,d\theta + \frac{\lambda}{2} \int_0^{\theta_1} u^2(g\sinh^{2n-2}\theta)'\,d\theta \\ &+ \int_0^{\theta_1} \frac{u^{p+1}}{p+1}(g\sinh^{2n-2}\theta)'\,d\theta 
 =\, \frac{\sinh^{2n-2}\theta_1 (u'(\theta_1))^2g(\theta_1)}{2}.\\
\end{split}\end{equation}

Let $f(\theta) = \frac{1}{2}g'\sinh^{n-1}\theta.$ Multiplying equation (\ref{ecn:2}) by $f(\theta)u(\theta)\sinh^{n-1}\theta$ and integrating, we obtain

$$ - \int_0^{\theta_1}(\sinh^{n-1}\theta\, u')' fu\,d\theta = \lambda\int_0^{\theta_1}f\sinh^{n-1}\theta \,u^2\,d\theta +\int_0^{\theta_1} u^{p+1}f\sinh^{n-1}\theta\,d\theta.$$ 

\noindent After integrating by parts, this last equation can be written as 

\begin{equation}\label{ecn:intpp2}\begin{split}
\int_0^{\theta_1} u'^2 f\sinh^{n-1}\theta \, d\theta = &\, \int_0^{\theta_1} u^2 \left( \lambda f\sinh^{n-1}\theta + \frac{1}{2}(f'\sinh^{n-1}\theta)'\right)\, d\theta \\  &\, +\int_0^{\theta_1}u^{p+1}f\sinh^{n-1}\theta\, d\theta. \\
\end{split}\end{equation}

\noindent By subtracting equation (\ref{ecn:intpp1}) from equation (\ref{ecn:intpp2}) we obtain 

\begin{equation}\label{ecn:contradiccion1}
\int_0^{\theta_1} A(\theta)u(\theta)^2\,d\theta + \int_0^{\theta_1} B(\theta) u(\theta)^{p+1}\,d\theta =  \frac{\sinh^{2n-2}\theta_1 (u'(\theta_1))^2g(\theta_1)}{2},
\end{equation}

\noindent where

\begin{equation*}
A(\theta) \equiv \frac{1}{2}(f'(\theta)\sinh^{n-1}\theta)' + \lambda f(\theta)\sinh^{n-1}\theta + \frac{\lambda}{2}(g(\theta)\sinh^{2n-2}\theta)';
\end{equation*}

\noindent and

\begin{equation*}
 B(\theta) = f(\theta)\sinh^{n-1}\theta + \frac{(g(\theta)\sinh^{2n-2}(\theta))'}{p+1}. 
\end{equation*}

\noindent Notice that the right-hand side of equation (\ref{ecn:contradiccion1}) is nonnegative. We will show that the left-hand side of (\ref{ecn:contradiccion1}) is negative, thus arriving at a contradiction. 
\bigskip

Using the definition of $f$ and simplifying, we can write 

\begin{equation*}\begin{split}
 A(\theta) =&\, \sinh^{2n-2}\theta \left[  \frac{g'''}{4} + \frac{3}{4}(n-1)\coth\theta g''  \right.\\ &\left.+\left( \lambda   + \frac{n-1}{4} + \frac{(n-1)(2n-3)}{4}\coth^2\theta \right)g'  + \lambda (n-1)\coth\theta g \right].\\
 \end{split}\end{equation*}

Finally, making the change of variables $T (\theta)= g(\theta)\sinh^2\theta,$ we obtain 

\begin{equation*}\begin{split}
A(\theta)=&\,\sinh^{2n-4}\theta\left[  \frac{T'''}{4} +\frac{3}{4}(n-3)\coth\theta\, T'' + \left(  \frac{1}{4}\coth^2\theta(n-3)(2n-11) \right.\right. \\
&\left.\left. +\,\lambda +\frac{1}{4}(n-7)\right)T' + (n-3)\left(\coth\theta(\lambda-2)-\coth^3\theta (n-4)\right)\,T   \right].\\
\end{split}\end{equation*}

Simplifying $B,$ we obtain

\begin{equation*}
B(\theta) = \frac{(n-1)\sinh^{2n-2}\theta}{n}\left( g'(\theta) + (n-2)\coth\theta g\right).
\end{equation*}

\bigskip

\noindent As before, we make the change of variables $T (\theta)= g(\theta)\sinh^2\theta,$ to obtain

\begin{equation*}
B(\theta) = \frac{(n-1)}{n}\sinh^{2n-4}\theta \left( T' + (n-4)\coth\theta \,T\right).
\end{equation*}

We will show that there is a choice of $T$ for which $A(\theta) \equiv 0.$ We will then show that for this choice of $T,$ $B(\theta)$ is negative as long as 

\begin{equation}\label{eq:intervalono}
   \lambda\in \left(\frac{n(n-2)}{4}\,,\, \frac{n(n-2)}{4} +L^* \right].
   \end{equation}

\begin{lemma}
Consider the equation
\begin{equation}\label{eq:noexistencia}\begin{split}
&\frac{T'''}{4} +\frac{3}{4}(n-3)\coth\theta\, T'' + \left(  \frac{1}{4}\coth^2\theta(n-3)(2n-11) +\lambda +\frac{1}{4}(n-7)\right)T' \\
 &+ (n-3)\left(\coth\theta(\lambda-2)-\coth^3\theta (n-4)\right)\,T = 0.\\
\end{split}\end{equation}

Then 
$$T(\theta) = \sinh^{4-n}\theta P_{\ell}^{\alpha}(\cosh\theta)P_{\ell}^{-\alpha}(\cosh\theta)$$

\noindent  is a solution of (\ref{eq:noexistencia}), where $\alpha = (2-n)/{2}$ and $\ell(\ell+1) = \alpha(\alpha-1)-\lambda.$

\end{lemma}


\begin{proof}
Let $v(\theta) = y_1(\theta)y_2(\theta),$ where $y_1(\theta) = P_{\ell}^{\alpha}(\cosh\theta)$ and $y_2(\theta) = P_{\ell}^{-\alpha}(\cosh\theta).$ Then $y_1$ and $y_2$ are solutions of 

\begin{equation}\label{eq:legendre1}
y''(\theta) +\coth\theta  y'(\theta) + k(\theta) y(\theta) =0,
\end{equation}

\noindent where 

\begin{equation*}
k(\theta) = -\ell(\ell+1) - \frac{\nu^2}{\sinh^2\theta}. 
\end{equation*}

It follows from equation (\ref{eq:legendre1}) that 

$$  y_1'' \,y_2 +  y_2'' \,y_1  = -\coth\theta\,  v' -2k v,$$

\noindent and from the above that 

$$  v'' = 2 y_1' y_2' - \coth\theta  v' -2k v.$$ 

\noindent Similarly, we can write 

$$  y_1''  y_2' + y_1'  y_2''  = -2\coth\theta  y_1'  y_2' - k v',$$ 

\noindent from which it follows that 

$$ v''' = -\coth\theta  v'' + \left( \frac{1}{\sinh^2\theta}-4k\right)  v'  -2 k' v - 4\coth\theta  y_1' y_2'.$$

\noindent Using the fact that $ y_1' y_2' = \frac{1}{2}\left(  v'' +\coth\theta v' + 2kv\right),$ we obtain 

\begin{equation}\label{eq:noex3}
v''' + 3\coth\theta  v'' + \left( 2\coth^2\theta +4k - \frac{1}{\sinh^2\theta}\right) v' + \left( 2 k' + 4k\coth \theta\right) v = 0. 
\end{equation}

Finally, replacing $ v(\theta) = T(\theta) \sinh^{n-4}\theta $ in equation (\ref{eq:noex3}), using the fact that $\ell(\ell+1) =\alpha(\alpha-1)-\lambda,$ and after significant simplification and rearrangement of terms, we obtain precisely equation (\ref{eq:noexistencia}). 

\end{proof}


It suffices now to show that for $T$ as in the previous lemma, $B$ is negative. We do so in the following lemma. 



\begin{lemma} 

 Let $$T(\theta) = \sinh^{4-n}\theta P_{\ell}^{\alpha}(\cosh\theta)P_{\ell}^{-\alpha}(\cosh\theta)$$

\noindent  where $\alpha = (2-n)/{2},$ $\theta\in(0,\theta_1),$ and $L= -\ell(\ell+1) =\lambda- \alpha(\alpha-1).$ Then 
\begin{equation}\label{eq:antesdet}
B(\theta) = \frac{(n-1)}{n}\sinh^{2n-4}\theta \left( T' + (n-4)\coth\theta \,T\right) 
\end{equation}
\noindent is negative if $0<L\le L^*.$ 
\end{lemma}


\begin{proof}

Notice that the condition $0<L\le L^*$ is precisely the same as (\ref{eq:intervalono}).  Substituting $T(\theta) = \sinh^{4-n}\theta P_{\ell}^{\alpha}(\cosh\theta)P_{\ell}^{-\alpha}(\cosh\theta)$ in equation (\ref{eq:antesdet}), we obtain 
$$B(\theta) = \frac{(n-1)}{n}\sinh^{n+1}\theta \left( \dot P_{\ell}^\alpha P_{\ell}^{-\alpha} + P_{\ell}^{\alpha}\dot P_{\ell}^{-\alpha} \right). $$

\noindent Since $\sinh\theta$ is positive for $\theta>0,$ and since $P_{\ell}^{\alpha}P_{\ell}^{-\alpha}>0$ if $0<L\le L^*,$ it suffices to show that

$$ \frac{\dot P_{\ell}^\alpha}{P_{\ell}^\alpha} + \frac{\dot P_{\ell}^{-\alpha}}{P_{\ell}^{-\alpha}} <0.$$ 

Let 

\begin{equation}\label{eq:defdey}
 y_{\nu}(\theta) = \frac{1}{\sinh \theta} \frac{P_{\ell}^{\nu+1}}{P_{\ell}^{\nu}} + \frac{ \nu}{2\sinh^2\frac{\theta}{2}}.
 \end{equation}

\noindent Then, by the raising relation given by equation (\ref{eq:subida}) it follows that 

$$ \frac{\dot P_{\ell}^\alpha}{P_{\ell}^\alpha} + \frac{\dot P_{\ell}^{-\alpha}}{P_{\ell}^{-\alpha}} = \frac{1}{\sinh\theta}\left( \frac{P_{\ell}^{\alpha+1}}{P_{\ell}^\alpha} + \frac{P_{\ell}^{-\alpha+1}}{P_{\ell}^{-\alpha}}\right) = y_{\alpha} + y_{-\alpha}.$$ 

\noindent We will show that for $\theta \in (0,\theta_1),$ and if $-1<\nu<1,$ then $ y_{\nu}(\theta)<0.$ This will imply that $y_{\alpha}(\theta) + y_{-\alpha}(\theta) <0,$ and therefore  that $B$ is negative. 

\bigskip 

From equations (\ref{eq:expansionenseries}) and (\ref{eq:hg}) it follows that 

$$ P_{\ell}^{\nu} = \frac{1}{\Gamma(1-\nu)}\coth^\nu\left( \frac{\theta}{2}\right) \left(1+ \frac{\ell(\ell+1)}{1-\nu}\sinh^2\left( \frac{\theta}{2}\right) + \mathcal{O}\left( \sinh^4\left( \frac{\theta}{2}\right)\right) \right).$$

\noindent Then, and since $\Gamma(1-\nu) = -\nu\Gamma(-\nu),$ we can write 

$$ \frac{P_{\ell}^{\nu+1}}{P_{\ell}^\nu} = -\nu \coth\left( \frac{\theta}{2}\right) \left( 1 - \frac{\ell(\ell+1)}{\nu(1-\nu)}\sinh^2\left(\frac{\theta}{2}\right) + \mathcal{O}\left(\sinh^4  \left(\frac{\theta}{2}\right)\right)\right).$$

\noindent Therefore, and since $\coth\left(\frac{\theta}{2}\right)/\sinh\theta = \left(2\sinh^2\left(\frac{\theta}{2}\right)\right)^{-1},$ we have 

$$ y_{\nu} = \frac{\ell(\ell+1)}{2(1-\nu)} + \mathcal{O}\left(\sinh^2\left(\frac{\theta}{2}\right)\right).$$ 

\noindent Thus, if $-1<\nu<1,$ and since $\ell(\ell+1)<0,$

$$ \lim_{\theta\rightarrow 0}y_{\nu}(\theta) = \frac{\ell(\ell+1)}{2(1-\nu)}<0.$$ 

\noindent We will show by contradiction that there is no point at which $y_{\nu}$ changes sign, thus concluding that $y_{\nu}(\theta)$ is negative for all $\theta> 0.$

\bigskip

Taking the derivative of equation (\ref{eq:defdey}), we obtain 

$$ y_{\nu}' = -\frac{\cosh\theta}{\sinh^2\theta}\frac{P_{\ell}^{\nu+1}}{P_{\ell}^\nu} + \frac{\dot P_{\ell}^{\nu+1}}{P_{\ell}^\nu} - \frac{\dot P_{\ell}^\nu}{P_{\ell}^\nu} \frac{P_{\ell}^{\nu+1}}{P_{\ell}^\nu} - \frac{\nu}{2}\frac{\cosh \left(\frac{\theta}{2}\right)}{\sinh^3 \left(\frac{\theta}{2}\right)}.$$ 

\noindent Using the raising and lowering relations given in equations (\ref{eq:subida}) and (\ref{eq:bajada}), we can write 

\begin{equation*}\begin{split}
 y_{\nu}' =&\, \frac{-1}{\sinh\theta}\left( \frac{P_{\ell}^{\nu+1}}{P_{\ell}^\nu}\right)^2 + \frac{(-2\nu-2)\cosh\theta}{\sinh^2\theta}\left( \frac{P_{\ell}^{\nu+1}}{P_{\ell}^\nu}\right) \\&\,+ \frac{\ell(\ell+1)-\nu(\nu+1)}{\sinh\theta} - \frac{\nu\cosh\frac{\theta}{2}}{2\sinh^3\frac{\theta}{2}}.\\
 \end{split}\end{equation*} 

\noindent Solving for $\left( \dfrac{P_{\ell}^{\nu+1}}{P_{\ell}^\nu}\right)$ from equation (\ref{eq:defdey}), and after rearranging terms, we obtain 

\begin{equation}\label{eq:Ricatti}
 y_{\nu}'  = -\sinh\theta y_{\nu}^2 + \frac{2(\nu-\cosh\theta)}{\sinh\theta}y_{\nu} + \frac{\ell(\ell+1)}{\sinh\theta}.
 \end{equation}

Now suppose there was a point $\theta^*$ at which $y_{\nu}(\theta^*)$ crossed the $\theta$-axis.  At this point, we would have $y_{\nu}(\theta^*) = 0$ and $y_{\nu}'(\theta^*)>0.$ But evaluating equation (\ref{eq:Ricatti}) at $\theta^*,$ we obtain

$$ y_{\nu}'(\theta^*) = \frac{\ell(\ell+1)}{\sinh\theta^*} <0,$$

\noindent arriving at a contradiction. 

\end{proof}

This completes the proof of Theorem \ref{lem:noexistencia}.

\end{proof}


\section{Uniqueness}\label{sec:unicidad}

\begin{lemma}

The problem 

\begin{equation}\label{eq:uni1}u''(\theta) + (n-1)\coth(\theta)u'(\theta) +\lambda u(\theta) + u(\theta)^p = 0
\end{equation}

\noindent with $u'(0) = u(\theta_1)= 0,\,$ $2<n<4,\,$ and $\lambda > \dfrac{n(n-2)}{4},$ has at most one positive solution. 

\end{lemma}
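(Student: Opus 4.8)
The plan is to reduce (\ref{eq:uni1}) to a radial semilinear problem on a Euclidean ball and then invoke the radial uniqueness theory of \cite{KwLi92}. Working in the Poincar\'e ball model with $r=\tanh(\theta/2)$, $R=\tanh(\theta_1/2)<1$ and $\rho(r)=2/(1-r^2)$, I would perform the conformal substitution $w=\rho^{\frac{n-2}{2}}u$ --- the same change already used (in the form $u=\rho^{\frac{2-n}{2}}v$) in Section~\ref{sec:existencia}. Since $\mathbb{H}^n$ has constant scalar curvature $-n(n-1)$, its conformal (Yamabe) Laplacian is $-\Delta_{\mathbb{H}^n}-\frac{n(n-2)}{4}$, and the conformal covariance of this operator turns $-\Delta_{\mathbb{H}^n}u=\lambda u+u^p$ into
$$-\Delta w=\Lambda\,\rho^2\,w+w^{\frac{n+2}{n-2}}\quad\text{on }B_R\subset\mathbb{R}^n,\qquad \Lambda:=\lambda-\frac{n(n-2)}{4}.$$
The boundary data transfer as $w'(0)=0$ (using $\rho'(0)=0$) and $w(R)=0$, and $w>0$ if and only if $u>0$, so positive solutions of the two problems are in bijection.

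The decisive point is that the hypothesis $\lambda>\frac{n(n-2)}{4}$ is precisely $\Lambda>0$, which makes the coefficient $K(r):=\Lambda\rho(r)^2=4\Lambda/(1-r^2)^2$ of the linear term strictly positive on $[0,R]$. For radial $w$ the transformed problem reads
$$w''(r)+\frac{n-1}{r}\,w'(r)+K(r)\,w+w^{\frac{n+2}{n-2}}=0,\qquad w'(0)=w(R)=0,\quad w>0,$$
a radial Brezis--Nirenberg equation with a smooth, positive, even, monotone weight $K$, in which $n\in(2,4)$ enters only as a real parameter in the radial operator. This is exactly the class of problems addressed in \cite{KwLi92}.

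I would then verify the structural hypotheses of \cite{KwLi92}: the critical super-linear growth of the nonlinearity; the sign, regularity and monotonicity of the weight $K$; and the monotonicity of the logarithmic quantity $uf'(u)/f(u)$ attached to $f(u)=\lambda u+u^p$. The last check is immediate: setting $v=u^{p-1}$ one has $uf'(u)/f(u)=(\lambda+pv)/(\lambda+v)$, whose derivative in $v$ equals $(p-1)\lambda/(\lambda+v)^2>0$, so the quantity is strictly increasing, which is the comparison condition that the uniqueness argument needs. Granting the hypotheses, \cite{KwLi92} yields at most one positive radial $w$, and undoing the (bijective) substitution gives at most one positive solution $u$ of (\ref{eq:uni1}).

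The hard part will be matching these hypotheses to the precise statement in \cite{KwLi92}: one must confirm that their theorem tolerates a non-integer dimension $n\in(2,4)$ and a genuinely variable linear coefficient $K(r)$ rather than a constant, and that the monotonicity and convexity conditions underlying their proof hold for $K(r)=4\Lambda/(1-r^2)^2$ alongside the critical power nonlinearity. Once the curvature normalization identifying $\frac{n(n-2)}{4}$ with the threshold $\Lambda=0$ is in place, the conformal reduction and the correspondence of solutions are routine.
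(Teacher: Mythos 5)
Your conformal reduction to the Euclidean ball is correct (it is essentially Stapelkamp's reduction, and the identity $-\Delta w=\Lambda\rho^2w+w^p$ with $\Lambda=\lambda-n(n-2)/4$ can be checked directly at the ODE level for any real $n\in(2,4)$, so the non-integer dimension is not the obstacle). The genuine gap is that you stop exactly where the proof has to happen: you never verify the hypothesis of \cite{KwLi92} that actually drives uniqueness. The condition you do check, monotonicity of $uf'(u)/f(u)$ for $f(u)=\lambda u+u^p$, is not the operative one here; the Kwong--Li criterion for this class of problems requires that, after the equation is put in canonical form $\ddot y+G(s)y+y^p=0$ (no first-order term, constant coefficient on the nonlinearity), the coefficient $G$ be a $\Lambda$-function (e.g.\ monotone). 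In your formulation this verification is genuinely awkward: removing the first-order term from $w''+\frac{n-1}{r}w'+K(r)w+w^p=0$ by the Emden--Fowler substitution $t=r^{2-n}$ leaves \emph{both} coefficients variable, namely $\ddot w+a(t)w+b(t)w^p=0$ with $a(t)=4\Lambda r^{2n-2}/\bigl((n-2)^2(1-r^2)^2\bigr)$ and $b(t)=r^{2n-2}/(n-2)^2$, and one would then have to invoke and check the more elaborate two-coefficient conditions of \cite{KwLi92}. As written, the proposal asserts rather than proves the decisive step.

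The paper avoids this by a different substitution performed directly in the geodesic variable: $u(\theta)=\sinh^{(2-n)/2}(\theta)\,v(\theta)$ turns (\ref{eq:uni1}) into $\sinh^2\theta\,v''+\sinh\theta\cosh\theta\,v'+G_{\lambda}(\theta)v+v^p=0$ with $G_{\lambda}(\theta)=-\alpha^2+\bigl[\lambda-n(n-2)/4\bigr]\sinh^2\theta$; in the variable $s$ with $ds=d\theta/\sinh\theta$ this is exactly the canonical form with coefficient $1$ on $v^p$. Then $G_{\lambda}$ is monotone increasing precisely when $\lambda>n(n-2)/4$, which is the $\Lambda$-function condition, and \cite{KwLi92} applies immediately. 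Note that your $w$ and the paper's $v$ differ only by the elementary factor $r^{(n-2)/2}$: composing your conformal substitution with that factor is exactly what lands the equation in the form where the Kwong--Li hypothesis becomes a one-line check. To salvage your route, either perform that extra substitution or carry out the two-coefficient verification in full.
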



\begin{proof}
The proof of this lemma follows directly from \cite{KwLi92}. In fact, making the change of variables $u\rightarrow v$ given by $u(\theta) = \sinh^\alpha(\theta)v(\theta),$ where $\alpha = \dfrac{2-n}{2},$ equation (\ref{eq:uni1}) can be written as

\begin{equation}\label{eq:uni2}
\sinh^2(\theta)v''(\theta) +\sinh\theta\cosh\theta v'(\theta) + G_{\lambda}(\theta)v(\theta) + v(\theta)^p=0,
\end{equation}

\noindent where 

\begin{equation*}
G_{\lambda}(\theta) = -\alpha^2 + \left[ \lambda- \frac{n(n-2)}{4}\right]\sinh^2\theta. 
\end{equation*}

We define the energy function

\begin{equation*}
E[v] \equiv  \sinh^2\theta v'(\theta)^2 + \frac{2}{p+1}v(\theta)^{p+1} + G_{\lambda}(\theta)v(\theta)^2 = 0. 
\end{equation*}

\noindent Then if $v(\theta)$ is a solution of (\ref{eq:uni2}), 

\begin{equation*}
\frac{dE}{d\theta} = G_{\lambda}'(\theta) v(\theta)^2.
\end{equation*}

The function $G_{\lambda}(\theta)$ is increasing as long as $\lambda > \dfrac{n(n-2)}{4}.$ That is, $G_{\lambda}(\theta)$ is a $\Lambda-function$ and it follows from \cite{KwLi92} that $v$ (and therefore $u$) is unique. 
\end{proof}

\begin{remark}

Uniqueness of solutions to this problem for $\lambda \in \left(n(n-2)/4, (n-1)^2/4\right]$ was obtained by Mancini and Sandeep (see Proposition 4.4 in \cite{MaSa08}). Notice that $\lambda = (n-1)^2/4$ corresponds to the first eigenvalue in the limiting case $\theta_1 = \infty.$ The interval considered in \cite{MaSa08} is a strict subinterval of the interval we consider here. 

\end{remark}

\bibliographystyle{spmpsci}      

\bibliography{monatbn}   

\end{document}